\newcommand{\deletethis}[1]{{}}
\newcommand{\defeq}{\overset{\mathrm{def}}{=}}
\newcommand{\revision}[1]{{\textcolor{black}{#1}}}
\newcommand{\vaf}{{\alpha}}
\newcommand{\vbeta}{{\beta}}
\newcommand{\vlam}{{\lambda}}
\newcommand{\vups}{{\upsilon}}
\newcommand{\vA}{A}
\newcommand{\vb}{b}
\newcommand{\vc}{c}
\newcommand{\ve}{e}
\newcommand{\vs}{s}
\newcommand{\vv}{v}
\newcommand{\vw}{w}
\newcommand{\vx}{x}
\newcommand{\vX}{X}
\newcommand{\vy}{y}
\newcommand{\vz}{z}
\newcommand{\vzero}{0}
\newcommand{\C}{\mathcal{C}}
\newcommand{\Cc}{\C^\circ}
\newcommand{\Cs}{\C^*}
\newcommand{\Csc}{(\Cs)^\circ}
\newcommand{\N}{\mathbb{N}}
\newcommand{\R}{\mathbb{R}}
\newcommand{\bS}{\mathbb{S}}
\newcommand{\Rm}{\R^m}
\newcommand{\Rn}{\R^n}
\newcommand{\Rmn}{\R^{m \times n}}
\newcommand{\K}{\mathcal{K}}
\newcommand{\Kc}{\K^\circ}
\newcommand{\Ks}{{\K^*}}
\newcommand{\Ksc}{{(\Ks)^\circ}}
\newcommand{\KAs}{{\K_A^*}}
\newcommand{\T}{\mathrm{T}}
\newcommand{\cE}{\mathcal{E}}
\newcommand{\cP}{\mathcal{P}}
\newcommand{\cR}{\mathcal{R}}
\newcommand{\SAB}{\mathcal{S}_{\mathscr{A},\mathscr{B}}}
\newcommand{\Kca}{{\K_{\vc,\vA}}}
\newcommand{\Kcas}{{\K^*_{\vc,\vA}}}
\newcommand{\bd}{\operatorname{bd}}
\newcommand{\supp}{\operatorname{supp}}
\newcommand{\BC}{\mathcal{B}}
\newcommand{\HC}{\mathcal{H}}
\newcommand{\fai}{p_{\vaf_i}}
\newcommand{\norm}[1]{\left\|#1\right\|}
\newcommand{\lrp}[1]{\left(#1\right)}
\newcommand{\st}{\textup{s.t.}}
\newtheorem{example}[theorem]{Example}
\newcommand{\TheTitle}{Dual certificates of primal cone membership}
\newcommand{\TheAuthors}{Joonyeob Lee, D{\'a}vid Papp, Anita Varga}
\title{{\TheTitle}\thanks{\textbf{Funding:} This material is based upon work supported by the National Science Foundation under Grant No. DMS-1847865. This material is based upon work supported by the Air Force Office of Scientific Research under award number FA9550-23-1-0370.}}
\author{
    Joonyeob LEE\thanks{North Carolina State University, Department of Mathematics. Email: \email{jlee223@ncsu.edu}.}
\and
    D{\'a}vid PAPP\thanks{North Carolina State University, Department of Mathematics. (ORCID: 0000-0003-4498-6417) Email: \email{dpapp@ncsu.edu}.}
\and
    Anita VARGA\thanks{North Carolina State University, Department of Mathematics. (ORCID: 0000-0001-7138-2921) Email: \email{avarga@ncsu.edu}.}
}
\begin{document}
\maketitle
\begin{abstract}
\revision{We discuss optimization problems over convex cones in which membership is difficult to verify directly. In the standard theory of duality, vectors in the dual cone $\Ks$ are associated with separating hyperplanes and interpreted as certificates of  \emph{non-membership} in the primal cone $\K$. Complementing this perspective, we develop easily verifiable certificates of \emph{membership} in $\K$ given by vectors in $\Ks$. Assuming that $\Ks$ admits an efficiently computable logarithmically homogeneous self-concordant barrier, every vector in the interior of $\K$ is associated with a full-dimensional cone of efficiently verifiable membership certificates. Consequently, rigorous certificates can be computed using numerical methods, including interior-point algorithms. The proposed framework is particularly well-suited to optimization over low-dimensional linear images of higher dimensional cones: we argue that these problems can be solved by optimizing directly over the (low-dimensional) dual cone, circumventing the customary lifting that introduces a large number of auxiliary variables. As an application, we derive a novel closed-form formula for computing exact primal feasible solutions from suitable dual feasible solutions; as the dual solutions approach optimality, the computed primal solutions do so as well. To illustrate the generality of our approach, we show that the new certification scheme is applicable to virtually every tractable subcone of nonnegative polynomials commonly used in polynomial optimization (such as SOS, SONC, SAGE and SDSOS polynomials, among others), facilitating the computation of rigorous nonnegativity certificates using numerical algorithms.}
\end{abstract}

\begin{keywords}
Cone membership certificates, Self-concordant barrier functions, Nonsymmetric conic optimization, Duality, Interior-point algorithms, Polynomial optimization
\end{keywords}

\begin{MSCcodes}
90C25, 90C51, 90C23, 49M29, 90C22
\end{MSCcodes}

\section{Introduction}

Consider a closed, convex, full-dimensional, pointed cone $\K\subset\Rm$ and a vector $b\in\Rm$. If $b\not\in\K$, then there is a hyperplane strongly separating $b$ from $\K$: there exists a vector $y\in\Rm$ (the normal vector of this hyperplane) satisfying
\begin{equation*}
	\vy^\T \vb < 0 < \vy^\T \vz
\end{equation*}
for every $z\in\K\setminus\{\vzero\}$ \cite[Chapter 6]{Guler2010}. This vector $y$ can be interpreted as a \emph{non-membership certificate}, that is, a proof that $b$ does not belong to $\K$. By definition, $y$ is an element of the interior of the dual cone of $\K$,
\[\Ks = \{\vy\in\Rm\,|\;\forall\vx\in\K:\vx^\T\vy\geq 0\} \;;\]
this gives rise to the standard duality theory of convex (conic) programming. In this paper we show that with the help of self-concordance theory, \emph{membership} (rather than non-membership) in the primal cone can be similarly certified by simply exhibiting suitable vectors from the dual cone. These certificates are not unique; on the contrary, each element of the interior of $\K$ has a full-dimensional cone of membership certificates. This facilitates their computation using numerical methods and has important theoretical consequences.

Our first main contribution is that whenever $\Ks$ is a hyperbolicity cone \revision{(see \cite{FarautKoranyi1994, Koecher1999} for the definition and standard examples)}, membership in $\K$ can be certified in a manner similar to the dual sums-of-squares certificates of \cite{DavisPapp2022} (we call these \emph{H-certificates} in \cref{def:dual-certificates-B-and-H}), but the generalization does not go any further (\cref{ex:ExpConeNoH}). Then we present a new certification scheme (termed \emph{B-certificates}) that is equally efficient for the same cones as H-certificates, but can also be applied to all cones $\K$ as long as $\Ksc$ is the domain of an efficiently computable logarithmically homogeneous self-concordant barrier function (\cref{thm:full-dimensional-certificate}).

We also present a unified algorithmic framework to efficiently compute these dual membership certificates. In Sections \ref{sec:Dikin} and \ref{sec:computing}, we argue that \emph{short-step} interior-point algorithms relying on small neighborhoods are the natural approach to find such certificates.

Of particular interest are cones $\K_A$ that are the linear images of a ``simpler'' but much higher dimensional cone $\K$. Suppose that
\begin{equation}\label{eq:Ax-cone}
\K_A \defeq \{Ax\,|\,x\in\K\},
\end{equation}
where $A\in\Rmn$ with $m \ll n$ is a matrix with linearly independent rows. On the surface, solving an optimization problem over $\K_A$, or even checking whether a given vector $b$ belongs to $\K_A$, requires the solution of an optimization problem over $\K$, and in order to certify $b\in\K_A$, we must present an $x\in\K$ satisfying $b=Ax$. This raises two issues: first, this ``primal'' membership certificate $x$ is $n$-dimensional, potentially much larger than the vector whose membership is being certified, which may be prohibitive even when $m=\dim(\K_A)$ is small. Second, if a numerical method is used to compute $x$, the computed certificate is likely only an approximate certificate: $\|b-Ax\|$ is small but nonzero. If we need an exact certificate (e.g., a rational vector $x \in \K \cap \mathbb{Q}^n$ with $b=Ax$), additional post-processing is required to ``round'' or project the inexact numerical certificate to an exact one, taking care that this additional step does not result in the certificate leaving the cone $\K$, which is a highly non-trivial task in general, especially if $\vA$ is ill-conditioned or the numerically computed $\vx$ is close to the boundary of $\K$. For example, in the context of semidefinite programming representations of sums-of-squares cones, this problem was studied in detail in \cite{PeyrlParrilo2008, KaltofenLiYangZhi2012}.

In Section \ref{sec:projected-cones}, we demonstrate that our dual membership certificates overcome both of these issues. First, because the cone of certificates for every vector is full-dimensional, numerically computed certificates are automatically exact, rigorous certificates; and since they are just elements of $\K_A^*$, they are not higher dimensional than the certified vector. Second, the certification schemes come with a \emph{reconstruction formula} for cones of the form \eqref{eq:Ax-cone} that allows the efficient computation of a primal membership certificate $x\in\K$ that by definition satisfies $b=Ax$ exactly. (Theorem \ref{thm:reconstruction}.) For most cones $\K$ that commonly arise in conic optimization problems, the formula automatically yields rational primal certificates $x$ (as long as $A$ is a rational matrix), which can also be verified efficiently in rational arithmetic.

In Section \ref{sec:optimization}, we show how dual certificates can be used to compute, with a similar closed-form formula, exact primal feasible solutions of conic optimization problems in standard form from suitable dual feasible solutions, with a guarantee that the closer the dual solutions are to optimality, the closer the computed primal feasible solutions will be to optimality. With this formula, most short-step, path-following interior-point algorithms, including dual-only methods, can be ``retrofitted'' to compute primal-dual feasible pairs with a guaranteed low duality gap with minimal additional computational cost in each iteration.

In Section \ref{sec:POP}, we outline the application of these ideas in polynomial optimization. The most commonly used inner approximations of the cones of nonnegative polynomials (including sums-of-squares (SOS) polynomials, sums of nonnegative circuit (SONC) polynomials, sums of AM-GM exponentials (aka. SAGE functions), SDSOS polynomials and others) are all cones of the form \eqref{eq:Ax-cone} with an easily computable matrix $\vA\in\mathbb{Q}^{m\times n}$, usually with $m\ll n$, and a (possibly non-symmetric) cone $\K$ for which $\Ks$ admits a well-known logarithmically homogeneous self-concordant barrier function. Therefore, the problem of rigorously certifying the nonnegativity of a polynomial using either of these cones amounts to the exact solution of a feasibility problem in $\K_\vA$, as discussed above. When $\K_\vA$ is any of the aforementioned cones of polynomials, the dual certificates introduced in this paper can be interpreted as easily verifiable rigorous polynomial nonnegativity certificates that can be computed numerically without constructing an explicit ``primal'' representation (such as an explicit representation of an SOS polynomial as a sum of squared polynomials). Should such a representation be necessary, the reconstruction formula can be used to compute an exact representation in closed form using a numerically computed dual certificate.

\smallskip

\paragraph{\bf Relation to prior work} The results in this paper are broad generalizations of prior work of Davis and Papp \cite{DavisPapp2022} on \emph{weighted sums-of-squares} (WSOS) cones, which are convex cones instrumental in global polynomial optimization. Membership in these cones can be cast as a (feasibility) semidefinite program (SDP) \cite{Parrilo2000}; every feasible solution to this SDP is an obvious membership certificate (a Gram matrix of the SOS polynomial). However, the number of variables in this SDP is at least an order of magnitude larger than the dimension of the cone, and in the presence of a large number of weights, the ratio of the dimensions of these two cones can be arbitrarily large. Instead, the certificates introduced in \cite{DavisPapp2022} (simply termed there as \emph{dual certificates}) are elements of the dual WSOS cone, and are thus low-dimensional, and can be more efficiently computed using non-symmetric cone programming methods than solving the conventional SDP. The theory presented in the present paper is a generalization of these earlier results, and the algorithms discussed here are simultaneously more general and more efficient (owing to their lower iteration complexity) than the algorithms presented in the earlier work.

In particular, the new certification scheme is applicable to virtually every tractable subcone of nonnegative polynomials commonly used in polynomial optimization besides WSOS polynomials; we outline some of these applications in Section \ref{sec:POP}.

\smallskip

\paragraph{\bf Background: convex cones and self-concordant barriers} Central to the paper are the notions of \emph{self-concordant barrier functions} and \emph{logarithmic homogeneity}. The reader may find a comprehensive treatment of this subject in the monographs \cite{NesterovNemirovskii1994,Nesterov2018,Renegar2001}; for easier referencing of key results, we recall a few fundamental definitions and properties below.

\begin{definition}
Let $\C\subseteq\Rn$ be a convex cone with a non-empty interior $\Cc$. A function $f:\Cc\to\R$ is called a \emph{strongly nondegenerate, self-concordant barrier} (SCB) for $\C$ if $f$ is strictly convex, at least three times differentiable, and has the following three properties (with $\bd(\C)$ denoting the boundary of $\C$):
\begin{enumerate}[\hspace{1em} 1.]
\item $f(x)\to\infty$  as $x\to\bd(\C)$.
\item $|D^3 f(x)[h,h,h]| \leq 2\left(D^2f(x)[h,h]\right)^{3/2}$.
\item \revision{Denoting the gradient and Hessian of $f$ by $g$ and $H$, respectively,} we have $\nu \defeq \sup_{x\in\Cc} g(x)^\T H(x)^{-1}g(x) < \infty$.
\end{enumerate}
We refer to the parameter $\nu$ above as the \emph{barrier parameter} of $f$.

We say that $f$ is a \emph{logarithmically homogeneous self concordant barrier} (LHSCB) with \emph{barrier parameter $\nu$}, or a $\nu$-LHSCB for short if, in addition to the properties above, it is also true that for every $x\in\Cc$ and $t>0$, $f(t x) = f(x)-\nu\ln(t)$.
\end{definition}

\revision{In the rest of the paper, we will continue to use $g$ and $H$ to denote the gradient and the Hessian of the SCB $f$.}

Given a positive definite, real, symmetric matrix $M$ of order $n$, we denote by $\langle \cdot,\cdot\rangle_M$ the inner product $(x,y)\mapsto x^\T My$; the induced norm is $\|\cdot\|_M$. If $M=H(x)$ is the Hessian of an LHSCB at $x\in\Cc$, we may use the shorthands $\|\cdot\|_x$ in place of $\|\cdot\|_{H(x)}$ and $\|\cdot\|^*_x$ in place of $\|\cdot\|_{H(x)^{-1}}$. These are commonly referred to as the primal and dual \emph{local norms} associated with the barrier function.

Proposition \ref{thm:SCB-properties} below summarizes a few properties of self-concordant barrier functions used throughout the paper. The proofs of these statements can be found, for example, in \cite[Chapters 2--3]{Renegar2001} and \cite[Chapter 5]{Nesterov2018}.

\begin{proposition}
\label{thm:SCB-properties} Suppose $f:\Cc\to\R$ is an SCB for some convex cone $\C\subseteq\Rn$. Then the following hold:
\begin{enumerate}[1.]
\item The gradient map $g:\Cc\to\Rn$ is a bijection between $\Cc$ and $-\Csc$. That is, for every $b\in\Csc$ there exists a unique $y_b\in\Cc$ that satisfies $-g(y_b)=b$.
\item For every $x,y\in\Cc$,

\begin{equation}
\label{eq:gTy-geq-ynormx}
 -g(x)^\T y \geq \|y\|_x.
\end{equation}
\end{enumerate}
If, in addition, $f$ is a $\nu$-LHSCB, then the following also hold:
\begin{enumerate}[label=\arabic*.]
\setcounter{enumi}{2}

\item The gradient and Hessian satisfy
\begin{equation*}
g(tx) = t^{-1}g(x) \text { and } H(tx) = t^{-2}H(x) \text{ for every } t>0,
\end{equation*}
and
\begin{equation}\label{eq:gH-identities}
    H(x)x = -g(x)\quad\text{and}\quad \|g(x)\|_{x}^* = \|x\|_{x} = \sqrt{-g(x)^\T x} = \sqrt{\nu}.
\end{equation}
\item \label{item:injective-LHSCB} If $A:\Rm\to\Rn$ is an injective linear operator and $\mathcal{A} \defeq \left\{x\in\Rm\,|\,A(x)\in\Cc\right\}$ is non-empty, then $x\mapsto f(A(x))$ is a $\nu$-LHSCB with domain $\mathcal{A}$.
\end{enumerate}
\end{proposition}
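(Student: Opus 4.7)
The plan is to verify the four claims in turn, using the defining properties of self-concordant barriers and, for items~3 and~4, the logarithmic homogeneity identity $f(tx)=f(x)-\nu\ln t$. All four are standard results appearing in \cite{Renegar2001,Nesterov2018}, so I sketch only the main ingredients.

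For item~1, strict convexity of $f$ yields injectivity of $g$ on $\Cc$, and the containment $g(\Cc)\subseteq -\Csc$ is an immediate consequence of item~2 (since $-g(x)^\T y\geq\|y\|_x>0$ for any nonzero $y\in\Cc$ places $-g(x)$ in $\Csc$). Surjectivity onto $-\Csc$ is the main technical point: for each $b\in\Csc$, I would argue that $x\mapsto f(x)+b^\T x$ is coercive on $\Cc$---the barrier blows up at $\bd(\C)$, while $b\in\Csc$ prevents divergence to $-\infty$ along any recession direction of $\C$---so the strictly convex sum attains a unique minimizer $y_b\in\Cc$ with $g(y_b)+b=0$.

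For item~2, fix $x,y\in\Cc$ and restrict $f$ to the ray through $x$ in direction $y$: the function $\phi(t)\defeq f(x+ty)$ is self-concordant on $[0,\infty)$ because $\Cc$ is a cone. Integrating $|\phi'''|\leq 2\phi''^{3/2}$ yields $\phi''(t)\geq(\phi''(0)^{-1/2}+t)^{-2}$, and a further integration gives $\lim_{t\to\infty}\phi'(t)\geq \phi'(0)+\sqrt{\phi''(0)}$. Separately, Cauchy--Schwarz combined with the finiteness of the barrier parameter gives $\phi'(t)^2\leq\nu\phi''(t)$, which together with convexity of $\phi$ forces $\phi'(t)\leq 0$ for all $t\geq 0$: a positive value of $\phi'$ at some point would, via the differential inequality $d(1/\phi')/dt\leq -1/\nu$, drive $\phi'$ to $+\infty$ in finite time, contradicting smoothness of $\phi$ on $[0,\infty)$. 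Combining the two bounds yields $\phi'(0)\leq -\sqrt{\phi''(0)}$, which, after substituting $\phi'(0)=g(x)^\T y$ and $\phi''(0)=\|y\|_x^2$, is exactly $-g(x)^\T y\geq\|y\|_x$. The main subtlety is packaging the one-dimensional barrier argument cleanly; everything else is routine.

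For item~3, I differentiate the homogeneity identity repeatedly. Once in $x$ gives $t\,g(tx)=g(x)$; a further differentiation in $x$ gives $t^2H(tx)=H(x)$. Differentiating $g(tx)=t^{-1}g(x)$ in $t$ at $t=1$ yields $H(x)x=-g(x)$, and differentiating the homogeneity identity itself in $t$ at $t=1$ gives $g(x)^\T x=-\nu$. Combined, these produce $\|x\|_x^2=x^\T H(x)x=-g(x)^\T x=\nu$ and $(\|g(x)\|_x^*)^2=g(x)^\T H(x)^{-1}g(x)=-g(x)^\T x=\nu$. Finally, for item~4, the chain rule yields $D^k(f\circ A)[h,\ldots,h]=D^k f(Ax)[Ah,\ldots,Ah]$ for $k=2,3$, so the defining SCB inequality transfers to $f\circ A$ unchanged, and injectivity of $A$ secures strict convexity. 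Logarithmic homogeneity is immediate from $f(A(tx))=f(Ax)-\nu\ln t$, and the identity $-(A^\T g(Ax))^\T x=-g(Ax)^\T Ax=\nu$ (via item~3) confirms that the barrier parameter is again $\nu$.
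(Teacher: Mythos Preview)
The paper does not actually prove this proposition: it simply states that the results are standard and refers the reader to \cite[Chapters 2--3]{Renegar2001} and \cite[Chapter 5]{Nesterov2018}. Your sketch is correct and tracks the standard arguments found in those references, so in that sense your approach is consistent with (indeed, more detailed than) the paper's treatment.

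One minor remark on presentation: in item~2 your argument for $\phi'(t)\leq 0$ tacitly uses that $\phi$ is defined and smooth on the entire half-line $[0,\infty)$, which is exactly where the hypothesis $y\in\Cc$ (together with $\C$ being a convex cone) enters; you might make that dependence explicit, since for a general barrier domain the blow-up argument would only work up to the boundary of the domain of $\phi$. Otherwise the derivations in items~1, 3, and~4 are routine and correct.
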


\section{Membership certificates from the dual cone}

Throughout this paper, we shall use the following assumptions and notational shortcuts: \revision{we assume that $\K$ is a \emph{proper convex cone}, i.e., it is convex, closed, full-dimensional (has a non-empty interior), and pointed; in addition, we assume that} the dual cone $\Ks$ admits a $\nu$-LHSCB $f$, and we denote the gradient and Hessian of $f$ at $y\in\Ksc$ by $g(y)$ and $H(y)$, respectively. Finally, we define
\[B(y) \defeq H(y)+g(y)g(y)^\T.\]

The membership certification schemes introduced in this section are based on the following facts. Note that the first half of the next theorem holds under the less restrictive assumption that $f$ is an SCB, rather than an LHSCB.
\begin{theorem}\label{thm:B-cert}
Let $\K$ be a proper convex cone whose dual cone $\Ks$ admits an SCB $f$. Then, with the above notation, for every $y\in\Ksc$ and $z\in\Ks$,
\begin{equation}\label{eq:BxinK}
B(y)z\in\K.
\end{equation}
If $\Ks$ is the hyperbolicity cone of the hyperbolic polynomial $p$ and $f$ is the logarithmic barrier $-\ln(p(\cdot))$, then we also have
\begin{equation}\label{eq:HxinK}
H(y)z\in\K.
\end{equation}
\end{theorem}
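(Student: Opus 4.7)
The overall plan is to use biduality. Since $\K$ is closed, $v\in\K$ if and only if $w^\T v\geq 0$ for every $w\in\Ks$; hence the two claims \eqref{eq:BxinK} and \eqref{eq:HxinK} reduce respectively to the scalar inequalities $w^\T B(y)z\geq 0$ and $w^\T H(y)z\geq 0$ for all $w\in\Ks$. Both sides are continuous in $w$ (and in $z$), so it suffices to restrict to $w,z\in\Ksc$.

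For \eqref{eq:BxinK} I would argue by direct computation. Expanding,
\[ w^\T B(y) z \;=\; w^\T H(y) z \,+\, (g(y)^\T w)(g(y)^\T z), \]
I bound the Hessian term using Cauchy--Schwarz in the local inner product induced by $H(y)$, giving $w^\T H(y) z\geq -\|w\|_y\|z\|_y$. For the rank-one term, applying \eqref{eq:gTy-geq-ynormx} at the point $y\in\Ksc$ with second argument $w$, and then again with $z$ (extending to $w,z\in\Ks$ by continuity), yields $-g(y)^\T w\geq\|w\|_y\geq 0$ and $-g(y)^\T z\geq\|z\|_y\geq 0$; multiplying gives $(g(y)^\T w)(g(y)^\T z)\geq\|w\|_y\|z\|_y$. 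Summing the two lower bounds produces $w^\T B(y)z\geq 0$.

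For \eqref{eq:HxinK} the hyperbolic hypothesis provides a spectral factorization $p(y+sz)=p(y)\prod_i(1+s\lambda_i(z;y))$, where the \emph{eigenvalues} $\lambda_i(z;y)$ of $z$ at $y$ are defined by $p(ty-z)=p(y)\prod_i(t-\lambda_i(z;y))$ and satisfy $\lambda_i(z;y)\geq 0$ for $z\in\Ks$. Differentiating $-\log p(y+sz)$ twice at $s=0$ yields $z^\T H(y)z=\sum_i\lambda_i(z;y)^2\geq 0$, which settles the diagonal case $w=z$. For the bilinear case, my plan is to interpret the Hessian action infinitesimally: since $H(y)=Dg(y)$, we have $H(y)z=\lim_{\epsilon\downarrow 0}\epsilon^{-1}(g(y+\epsilon z)-g(y))$, and because $\K$ is closed the inclusion $H(y)z\in\K$ reduces to the cone-monotonicity statement $g(y_1)-g(y_2)\in\K$ whenever $y_1-y_2\in\Ks$. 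In the matrix case $p=\det$, $\Ks=\K=\bS^n_+$, this is exactly the Löwner monotonicity of the map $Y\mapsto -Y^{-1}$.

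The main obstacle is expected to be establishing this cone-monotonicity for a general hyperbolic polynomial. Unlike the diagonal quadratic form, the bilinear form $w^\T H(y)z$ with $w\neq z$ does not admit a manifestly nonnegative spectral expression, so polarization alone is insufficient; the cross term in $(w+z)^\T H(y)(w+z)-w^\T H(y)w-z^\T H(y)z$ does not cleanly factor through the eigenvalue formula. I expect the argument to rely either on eigenvalue interlacing for hyperbolic polynomials (the hyperbolic analogues of Weyl's inequalities) or on a direct appeal to the hyperbolic barrier theory descended from Gårding's work, within which the ``$H(y)$ maps $\Ks$ into $\K$'' property is one of the standard consequences.
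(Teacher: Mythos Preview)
Your argument for \eqref{eq:BxinK} is correct and is essentially identical to the paper's: both use biduality, Cauchy--Schwarz in the local norm to bound $w^\T H(y)z$, and the gradient inequality \eqref{eq:gTy-geq-ynormx} to bound the rank-one term, then combine.

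For \eqref{eq:HxinK} the paper does not attempt a self-contained argument at all; it simply cites G{\"u}ler's result \cite[Thm.~6.2]{Guler1997}. Your sketch goes further by outlining a possible route (spectral representation, reduction to cone-monotonicity of $g$), but you correctly flag that the bilinear case is the real content and is not delivered by your polarization remark. Your closing sentence---that the property is ``one of the standard consequences'' of hyperbolic barrier theory---is in effect the same move the paper makes: defer to the literature. So there is no genuine gap, but your proposal for the hyperbolic case is not a proof so much as a pointer to where the proof lives, which is exactly what the paper does (more tersely). If you want a self-contained argument, the cone-monotonicity of $-g$ for hyperbolic barriers is indeed the right target, and G{\"u}ler's paper is where it is established.
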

\begin{proof}
Using properties of the SCBs and the Cauchy--Schwarz inequality, we have that every $y\in\Ksc$, $z\in\Ks$, and $w\in\Ks$ satisfy the following chain of inequalities:
\begin{align*}
\left(g(y)^\T w\right)\left(g(y)^\T z\right) \stackrel{\eqref{eq:gTy-geq-ynormx}}{\geq} \|w\|_y \|z\|_y \geq |\langle w,z\rangle_y| = |w^\T H(y) z| \geq -w^\T H(y) z.
\end{align*}
Rearranging the left- and right-hand sides, we get
\begin{align*}
0 \leq w^\T H(y) z + \left(g(y)^\T w\right)\left(g(y)^\T z\right) = w^\T \left(H(y)+g(y)g(y)^\T\right) z = w^\T B(y) z.
\end{align*}
Since $\vw\in\Ks$ can be chosen arbitrarily, this yields \cref{eq:BxinK}, proving our first assertion.

The second claim is G{\"u}ler's result from \cite[Thm.~6.2]{Guler1997}.
\end{proof}

The assumption of hyperbolicity in the second half of \cref{thm:B-cert} is necessary; the relation \eqref{eq:HxinK} does not hold for every cone, as the following examples show.
\begin{example}[Relation \eqref{eq:HxinK} does not hold for every cone]\label{ex:ExpConeNoH}
Consider the exponential cone $\cE$ defined by

\begin{equation*}
    \cE \defeq \operatorname{cl}\left(\left\{(x_1,x_2,x_3)\in\R_+^2\times\R\,|\,x_2>0, x_1 \geq x_2 e^{x_3/x_2}\right\}\right),
\end{equation*}
where $\operatorname{cl}(\cdot)$ denotes (topological) closure. A well-known $3$-LHSCB for $\cE$ is
\begin{equation*}
f_\cE(x_1,x_2,x_3) = -\ln x_1 -\ln x_2 -\ln(x_2 \ln(x_1/x_2)-x_3);
\end{equation*}
see, e.g., \cite[Eq.~(4.5)]{Nesterov2006}. Now, it can be verified by direct computation that the following vectors all belong to $\cE^\circ$:
\begin{equation*}
y = (6,2,-3), \quad z = (2,4,-3), \quad w = (416,1,6).
\end{equation*}
With some rather tedious arithmetic we can also compute that
\begin{equation*}
w^\T H(y) z = \frac{\ln (3) (3211+904 \ln (3))-4637}{9 (3+\ln (9))^2} < -0.075,
\end{equation*}
whose negativity proves that, although $y\in\cE$ and $z\in\cE$, $H(y)z\not\in\cE^*$. Therefore, $\K = \cE^*$ does not satisfy the relation \eqref{eq:HxinK}.

For another counterexample, consider the power cone $\cP_{(2/3,1/3)}$ defined by
\[ \cP_{(2/3,1/3)} \defeq \left\{(x_1,x_2,x_3)\in\R_+^2\times\R \,|\, (x_1^2x_2)^{1/3}\geq  |x_3| \right\}. \]
A well-known 3-LHSCB for this cone is
\[ f_{P_{(2/3,1/3)}}(x_1,x_2,x_3) = -\ln\left((x_1^2x_2)^{2/3}-x_3^2\right) - \frac{1}{3}\ln(x_1) - \frac{2}{3}\ln(x_2); \]
see, e.g., \cite{RoyXiao2022}. Now, it can be verified by direct computation that the following vectors all belong to $\cP_{(2/3,1/3)}^\circ$:
\begin{equation*}
y = (10,1,1), \quad z = (1,20,2), \quad w = (355,1,50),
\end{equation*}
however,
\begin{equation*}
w^\T H(y) z = \frac{12\,871-443\,620 \sqrt[3]{10} + 195\,500 \sqrt[3]{100}}{60 \left(1-10 \sqrt[3]{10}\right)^2} < -1.399,
\end{equation*}
proving that \eqref{eq:HxinK} is not satisfied.
\end{example}

Also note that the matrices $B(y)$ and $H(y)$ are automatically invertible for every $y\in\Ksc$, since $H(y)$ is positive definite. We are now ready for our key definitions.

\begin{definition}[dual membership certificates]
\label{def:dual-certificates-B-and-H}
Let $b\in\operatorname{span}(\K)$ be given. A vector $y\in\Ksc$ is a \emph{B-certificate of $b$} if, using the notation of \cref{thm:B-cert}, $B(y)^{-1}b\in\Ks$. It is an \emph{H-certificate of $b$} if $H(y)^{-1}b\in\Ks$. If the cone and the implied type of certificate are clear from the context, we may refer to these as \emph{dual certificates}.
\end{definition}

\revision{\cref{thm:B-cert} justifies the terminology: if a vector $b\in\operatorname{span}(\K)$ has a B-certificate, then it is guaranteed to be in $\K$, and the same holds for H-certificates if $\Ks$ is a hyperbolicity cone.}

In both cases, the certificate $y$ is easy to verify as long as the first two derivatives of the barrier function are easily computable (which in turn implies that membership in $\Ks$ is easily verifiable): one needs to first verify $y\in\Ksc$ and compute the matrix $B$ or $H$ at $y$; the rest of the verification process amounts to simple linear algebra and verification of the membership of another vector in $\Ks$. In particular, many known LHSCBs have efficiently computable gradients and Hessians that are also rational at every rational point. In this case, the computed rational dual membership certificates can even be rigorously verified in rational arithmetic.

\begin{example}
\label{ex:SOS}
Sums-of-squares (and weighted sums-of-squares) cones are subcones of polynomials that are non-negative over basic, closed semi-algebraic sets; they are well-known for their instrumental role in polynomial optimization \cite{Lasserre2001}. Their dual cones are spectrahedral and thus hyperbolic, and this means that H-certificates are valid membership certificates for sums-of-squares polynomials. In fact, H-certificates generalize the concept of ``dual certificates'' introduced in the recent work \cite{DavisPapp2022} for cones of (weighted) sums-of-squares polynomials.

When polynomials are represented in many of the commonly used bases (e.g., the monomial basis or the Chebyshev basis of the first or second kind), the duals of the corresponding sums-of-squares cones admit an LHSCB of the form $y\to -\ln\det\Lambda(y)$ with an injective linear map $\Lambda$ that maps rational vectors to rational symmetric matrices. As a result, $g(y)$ is a rational vector and $H(y)$ is a rational matrix for every rational vector $y$. This yields a simple, independent proof of a theorem of Powers \cite{Powers2011} on the existence of rational SOS certificates \cite{DavisPapp2022}, and also means that rational H-certificates of SOS polynomials with rational coefficients can be efficiently verified in rational arithmetic \cite{DavisPapp2024}. We shall expound on this example in Section \ref{sec:SOS}.
\end{example}

We now show that the converse also holds: every vector in $\Kc$ has a B-certificate, and if the employed SCB is an LHSCB, then $\Ks$ contains a \emph{full-dimensional} subcone of vectors that certify a given $b\in\Kc$. The fact that these cones of certificates are full-dimensional is critical, as they allow us to compute them easily with numerical methods such those discussed in Section \ref{sec:computing}.

\begin{theorem}\label{thm:full-dimensional-certificate}
Let $b\in\Kc$, and consider its set of B- and H-certificates with respect to an LHSCB of $\Ksc$:
\begin{equation}\label{eq:BC-def}
 \BC(b) \defeq \left\{ y\in\Ksc\,\middle|\,B(y)^{-1}b\in\Ks \right\}
\end{equation}
and
\begin{equation}\label{eq:HC-def}
 \HC(b) \defeq \left\{ y\in\Ksc\,\middle|\,H(y)^{-1}b\in\Ks \right\}.
\end{equation}
Additionally, let $y_b$ be the unique vector in $\Ksc$ that satisfies $-g(y_b) = b$. (Recall \cref{thm:SCB-properties}.) Then $\BC(b)$ and $\HC(b)$ are full-dimensional subcones of $\Ksc$ containing $y_b$.
\end{theorem}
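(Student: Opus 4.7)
The plan is to anchor everything at the single point $y_b$ and then spread out by continuity. First, I would verify that $\BC(b)$ and $\HC(b)$ are both invariant under positive scalings of $y$, using the LHSCB homogeneity identities $g(ty)=t^{-1}g(y)$ and $H(ty)=t^{-2}H(y)$ from \cref{thm:SCB-properties}. These immediately give $B(ty)=t^{-2}B(y)$, hence $H(ty)^{-1}b = t^2 H(y)^{-1}b$ and $B(ty)^{-1}b = t^2 B(y)^{-1}b$, so each set is closed under multiplication by $t>0$ and is indeed a subcone of $\Ksc$.

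Next, I would show $y_b\in\HC(b)\cap\BC(b)$. For the H-certificate, the identity $H(y)x = -g(y)$ applied at $y=y_b$ combined with $-g(y_b)=b$ gives $H(y_b)y_b=b$, so $H(y_b)^{-1}b = y_b$, and $y_b\in\Ksc\subset\Ks$. For the B-certificate, observe that $g(y_b)g(y_b)^\T = bb^\T$, so $B(y_b) = H(y_b) + bb^\T$ is a rank-one update of $H(y_b)$. Applying the Sherman--Morrison formula and using $H(y_b)^{-1}b=y_b$ together with the LHSCB identity $b^\T y_b = -g(y_b)^\T y_b = \nu$ from \cref{eq:gH-identities} yields
\begin{equation*}
 B(y_b)^{-1}b \;=\; H(y_b)^{-1}b \;-\; \frac{H(y_b)^{-1}bb^\T H(y_b)^{-1}b}{1+b^\T H(y_b)^{-1}b} \;=\; y_b - \frac{\nu\, y_b}{1+\nu} \;=\; \frac{y_b}{1+\nu}.
\end{equation*}
Since $y_b\in\Ksc$, this vector lies in $\Ksc\subset\Ks$, proving $y_b\in\BC(b)$.

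The final step is the full-dimensionality. The key observation is that in both cases the image vector lies in the \emph{interior} $\Ksc$, not merely on the boundary of $\Ks$: $H(y_b)^{-1}b = y_b$ and $B(y_b)^{-1}b = y_b/(1+\nu)$ are both in $\Ksc$. Because $g$ and $H$ are continuous on $\Ksc$ and $H(y)$ is invertible there (and $B(y)$ is invertible as a positive-definite plus rank-one matrix), the maps $y\mapsto H(y)^{-1}b$ and $y\mapsto B(y)^{-1}b$ are continuous on $\Ksc$. Since $\Ksc$ is open, there exists an open neighborhood $U$ of $y_b$ on which both $H(y)^{-1}b$ and $B(y)^{-1}b$ remain in $\Ksc$, which in particular lies in $\Ks$. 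Thus $U\subset\BC(b)\cap\HC(b)$, and both sets contain a non-empty open subset of $\Rm$, hence are full-dimensional.

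I do not anticipate a serious obstacle here; the main step requiring care is the Sherman--Morrison computation for $\BC(b)$, where one must correctly identify $g(y_b)g(y_b)^\T$ as the rank-one update $bb^\T$ and recognize that the scalar $b^\T H(y_b)^{-1}b$ simplifies to $\nu$ via the LHSCB identity. The rest is a direct application of continuity together with the homogeneity-based scaling argument.
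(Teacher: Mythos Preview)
Your proposal is correct and follows essentially the same approach as the paper: establish the cone property via the LHSCB homogeneity identities, verify $y_b\in\HC(b)\cap\BC(b)$ by computing $H(y_b)^{-1}b=y_b$ and (via Sherman--Morrison together with $b^\T H(y_b)^{-1}b=\nu$) $B(y_b)^{-1}b=y_b/(1+\nu)$, and then invoke continuity of $y\mapsto H(y)^{-1}b$ and $y\mapsto B(y)^{-1}b$ to obtain an open neighborhood inside each set. The only cosmetic difference is that the paper first derives the general identity $B(y)^{-1}=H(y)^{-1}-\tfrac{1}{1+\nu}yy^\T$ valid for all $y\in\Ksc$ and then specializes to $y_b$, whereas you apply Sherman--Morrison directly at $y_b$ using $g(y_b)g(y_b)^\T=bb^\T$; both routes yield the same computation.
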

\begin{proof}
From the definitions in \eqref{eq:BC-def} and \eqref{eq:HC-def} and \cref{thm:SCB-properties} it immediately follows that both $\BC(b)$ and $\HC(b)$ are subsets of $\Ksc$ and that they are cones; all we have left to show is that they both contain every vector $y$ in some neighborhood of $y_b$.

Recall from Eq.~\eqref{eq:gH-identities} that $H(y)^{-1}g(y) = -y$ for every $y\in\Ksc$. Thus, we have
\begin{equation*}
H(y_b)^{-1}b = -H(y_b)^{-1}g(y_b) = y_b \in \Ksc,
\end{equation*}
proving that $y_b\in\HC(b)$. And because the function $y\mapsto H(y)^{-1}b$ is continuous in some neighborhood of $y_b$, it is immediate that $y\in\HC(b)$ for every $y$ in some neighborhood of $y_b$, proving the second claim.

The proof of the first claim is similar, but we need to use, in addition, the Sherman--Morrison formula and the second identity of Eq.~\eqref{eq:gH-identities} to compute
\begin{subequations}\label{eq:Binv}
\begin{align}
B(y)^{-1} &= \left(H(y)+g(y)g(y)^\T\right)^{-1} = H(y)^{-1} - \frac{H(y)^{-1}g(y)g(y)^\T H(y)^{-1}}{1+g(y)^\T H(y)^{-1}g(y)}\\
&\stackrel{\eqref{eq:gH-identities}}{=} H(y)^{-1} - \frac{1}{1+\nu}yy^\T,
\end{align}
\end{subequations}
after which we can see that
\begin{align*}
B(y_b)^{-1}b &= -B(y_b)^{-1}g(y_b) = -H(y_b)^{-1}g(y_b) + \frac{y_b\left(y_b^\T g(y_b)\right)}{1+\nu} \\ &\stackrel{\eqref{eq:gH-identities}}{=} \left(1-\frac{\nu}{\nu+1} \right)y_b \in \Ksc.
\end{align*}
That is, $y_b\in\BC(b)$, moreover, every $y$ in a neighborhood of $y_b$ is also contained in $\BC(b)$.
\end{proof}

\deletethis{
The following definition is a useful shorthand for the rest of the paper, motivated by the previous theorem.

\begin{definition} \label{def:gradient_certificate}
We shall refer to the unique vector $y_b\in\Ksc$ that satisfies $-g(y_b)=b$ as the \emph{gradient certificate} of $b$.
\end{definition}
The gradient certificate, by definition, depends not only on the vector $b$ and the cone it belongs to, but also on the barrier function whose gradient is used in the definition. However, as this barrier will always be clear from the context, we do not explicitly annotate this dependence of $y_b$ on the barrier function.
}

We close this section by showing that a B-certificate is always an H-certificate, but not vice versa. That is, the cone of H-certificates for a given vector is generally larger than the cone of B-certificates (but, unlike B-certificates, they only certify membership when $\Ks$ is the hyperbolicity cone).

\begin{lemma}
If a vector $y\in\Ksc$ is a B-certificate for $b$, then $y$ is also an H-certificate for $b$. The converse, in general, does not hold.
\end{lemma}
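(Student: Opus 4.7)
The plan is to derive the forward implication directly from the Sherman--Morrison identity \eqref{eq:Binv} that was already worked out in the proof of Theorem \ref{thm:full-dimensional-certificate}, and to exhibit a small explicit counterexample for the converse.

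For the forward direction, I would rearrange \eqref{eq:Binv} into the equivalent identity
\[ H(y)^{-1} b \;=\; B(y)^{-1} b \;+\; \frac{y^\T b}{1+\nu}\, y. \]
If $y$ is a B-certificate for $b$, then $B(y)^{-1} b \in \Ks$ by definition, and applying Theorem \ref{thm:B-cert} with $z = B(y)^{-1} b$ gives $b = B(y) z \in \K$; since $y \in \Ks$ this forces $y^\T b \geq 0$. Hence the second term above is a nonnegative multiple of $y \in \Ks$, so it lies in $\Ks$. The displayed identity then writes $H(y)^{-1} b$ as a sum of two elements of the convex cone $\Ks$, placing it in $\Ks$, which is exactly the statement that $y$ is an H-certificate for $b$.

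For the converse I would fall back on the minimal hyperbolic example $\K = \Ks = \R_+^2$ equipped with the LHSCB $y \mapsto -\ln y_1 - \ln y_2$ (so that $\nu = 2$). At $y = (1,1)$, $H(y)$ is the identity while $B(y) = H(y) + g(y)g(y)^\T$ picks up off-diagonal entries from the rank-one term $g(y)g(y)^\T$, making $B(y)^{-1}$ have strictly negative off-diagonal entries. Choosing $b = (1,\varepsilon)$ with $\varepsilon > 0$ small leaves $H(y)^{-1} b = b \in \Ks$ but pushes $B(y)^{-1} b$ out of $\Ks$ through a negative second coordinate, so $y$ is an H-certificate but not a B-certificate for this $b$. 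I do not foresee any significant obstacle: the forward direction is an immediate consequence of the already-established identity \eqref{eq:Binv} combined with Theorem \ref{thm:B-cert}, and the counterexample reduces to a $2 \times 2$ computation.
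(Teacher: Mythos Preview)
Your forward direction is essentially identical to the paper's: both rearrange the Sherman--Morrison identity \eqref{eq:Binv} into $H(y)^{-1}b = B(y)^{-1}b + \frac{y^\T b}{1+\nu}\,y$ and observe that each summand lies in $\Ks$. You make explicit (via Theorem~\ref{thm:B-cert}) the step $b\in\K$ that the paper invokes without comment, but this is only an expository difference.

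For the converse, the two proofs diverge. The paper appeals to Example~\ref{ex:ExpConeNoH}: there one has a non-hyperbolic $\Ks$ and vectors $y\in\Ksc$, $z\in\Ks$ with $H(y)z\notin\K$, so $b\defeq H(y)z$ has $y$ as an H-certificate even though $b\notin\K$, and hence $y$ cannot be a B-certificate. Your counterexample instead works entirely inside the hyperbolic cone $\R_+^2$, producing an explicit $b\in\Kc$ for which $(1,1)$ is an H-certificate but not a B-certificate; the $2\times2$ computation you sketch is correct (indeed $B(1,1)^{-1}=\tfrac{1}{3}\begin{psmallmatrix}2 & -1\\ -1 & 2\end{psmallmatrix}$, and $B^{-1}(1,\varepsilon)$ has negative second coordinate for $\varepsilon<1/2$). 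Your example is more elementary and self-contained, and has the extra virtue of showing that $\HC(b)\supsetneq\BC(b)$ can occur even when both are valid certification schemes. The paper's route, on the other hand, highlights the conceptually stronger failure that H-certificates can exist for vectors outside $\K$ when $\Ks$ is not hyperbolic.
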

\begin{proof}
Assume $B(\vy)^{-1}\vb\in\Ks$. From our formula \eqref{eq:Binv} for $B(y)^{-1}$, we see that
\begin{equation*}
H(y)^{-1}b = B(y)^{-1}b + \frac{y^\T b}{1+\nu}y.
\end{equation*}
By assumption, the first term, $B(y)^{-1}b$, belongs to $\Ks$. So does the second term, $\frac{y^\T b}{1+\nu}y$, since $y\in\Ks$ by assumption and $b\in\K$, therefore $y^\T b\geq 0$. We conclude that $H(y)^{-1}b\in\Ks$, that is, $y$ is an H-certificate of $b$, as claimed.

Example \ref{ex:ExpConeNoH} shows that the converse in general does not hold. For some cones that are not hyperbolicity cones, H-certificates exist even for some vectors $b\not\in\K$ (that is, they are not membership certificates in general cones), while we have seen that B-certificates only exist for vectors in $\K$.
\end{proof}

\section{A reconstruction formula for linear images of cones}
\label{sec:projected-cones}

We now turn our attention to our primary motivating examples, which are surjective linear images of high-dimensional cones. Let
\begin{equation}\label{eq:Ax-cone-with-dual}
\K_A = \{Ax\,|\,x\in\K\} \text{ and therefore }\; \KAs = \{y\,|\,A^\T y\in\Ks\},
\end{equation}
where \revision{$A\in\Rmn$}, the rows of $A$ are linearly independent, and assume that $\Ks$ has a non-empty interior that is the domain of a known, easily computable LHSCB. Even if $\K$ has a known LHSCB, its linear image $\K_A$ does not ``inherit'' an obvious barrier function; this is the main reason why it is difficult to optimize over $\K_A$ and why deciding feasibility in $\K_A$ is non-trivial even if deciding feasibility in $\K$ is easy. However, under mild conditions, $\KAs$ inherits an easily computable LHSCB from $\Ks$. \revision{More precisely, using Proposition \ref{thm:SCB-properties}, if $f$ is an LHSCB for $\Ks$, $A^\T$ is injective, and $\KAs$ is full-dimensional, then the function $f_A:(\KAs)^\circ\to\R$ defined by
\begin{equation}\label{eq:def-f}
f_A \defeq y \mapsto f(A^\T y)
\end{equation}
is automatically an LHSCB for $\KAs$.} The gradient and Hessian of $f_A$ can be expressed in closed form.

\begin{lemma}
Let $g$ and $H$ denote the gradient and Hessian of $f$. Then the gradient $g_A$ and Hessian $H_A$ of the function $f_A$ defined in \eqref{eq:def-f} can be calculated as follows
\begin{equation}\label{eq:gH_H}
g_A(y) = A g(A^\T y) \qquad H_A(y) = A H(A^\T y)A^\T.
\end{equation}
In the same vein,
\begin{equation}\label{eq:gH_B}
B_A(y) = A B(A^\T y)A^\T.
\end{equation}
\end{lemma}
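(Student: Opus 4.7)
The plan is purely computational: the three identities are successive applications of the multivariate chain rule combined with the definition of $B$, so there is no real obstacle to overcome.

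First I would compute $g_A$. Writing $f_A(y)=(f\circ\varphi)(y)$ with $\varphi(y)\defeq A^\T y$, the Jacobian of $\varphi$ is $A^\T$, so the chain rule gives $g_A(y) = (A^\T)^\T\, g(\varphi(y)) = A\,g(A^\T y)$, which is the first identity in \eqref{eq:gH_H}.

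Next I would compute $H_A$. Differentiating $y\mapsto A\,g(A^\T y)$ entrywise (or equivalently, applying the chain rule to the vector-valued map $g\circ\varphi$ and pre-multiplying by $A$) produces $H_A(y) = A\,\bigl(\nabla g\bigr)(A^\T y)\,A^\T = A\,H(A^\T y)\,A^\T$, giving the second identity in \eqref{eq:gH_H}. Since $f$ is an SCB (hence $C^2$) and $\varphi$ is linear, there are no smoothness issues and no ordering subtleties.

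Finally, \eqref{eq:gH_B} is obtained by substituting the previous formulas into the definition $B_A(y)\defeq H_A(y)+g_A(y)g_A(y)^\T$ and factoring $A$ on the left and $A^\T$ on the right:
\begin{equation*}
B_A(y) = A\,H(A^\T y)\,A^\T + \bigl(A\,g(A^\T y)\bigr)\bigl(A\,g(A^\T y)\bigr)^\T = A\bigl(H(A^\T y)+g(A^\T y)g(A^\T y)^\T\bigr)A^\T,
\end{equation*}
and the parenthesized expression is precisely $B(A^\T y)$. Since the proof reduces to a one-line chain rule and a one-line algebraic manipulation, there is no genuinely hard step; the only thing to be careful about is the placement of the transposes, which follows unambiguously from treating gradients as column vectors.
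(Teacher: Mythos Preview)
Your proof is correct; the paper states this lemma without proof, since (as you observe) it is an immediate consequence of the chain rule together with the definition of $B$.
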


Our next theorem gives a closed-form formula with which we can compute a suitable pre-image of $b\in\K_A$ that automatically belongs to $\K$. This makes the verification of $b\in\K_A$ even simpler (as long as membership in $\K$ can be easily verified).

\begin{theorem}[Reconstruction Theorem]
\label{thm:reconstruction}
Suppose $b\in\K_A^\circ$, and let $y\in \BC(b)$ be one of its dual membership certificates. Then the vector
\begin{equation}\label{eq:reconstruction-formula}
x = H(A^\T y)A^\T\!\left(AH(A^\T y)A^\T\right)^{-1}b
\end{equation}
satisfies $Ax=b$ and $x\in\K$.
\end{theorem}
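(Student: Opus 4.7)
The plan is to split the verification into the two required conclusions, $Ax=b$ and $x\in\K$, treating the first as an algebraic identity and the second by reducing to \cref{thm:B-cert}. First I would abbreviate $M \defeq B(A^\T y)$ so that the reconstruction formula reads $x = MA^\T(AMA^\T)^{-1}b$, from which $Ax = (AMA^\T)(AMA^\T)^{-1}b = b$ is immediate provided $AMA^\T$ is invertible. To justify the invertibility, I would argue that $y\in\BC(b)\subseteq(\KAs)^\circ$ forces $A^\T y\in\Ksc$ (since the barrier $f_A=f\circ A^\T$ from \eqref{eq:def-f} has domain $(\KAs)^\circ$), so $H(A^\T y)$ is positive definite; then $M = H(A^\T y)+g(A^\T y)g(A^\T y)^\T$ is also positive definite, and together with the linear independence of the rows of $A$ this makes $AMA^\T = B_A(y)$ (by \eqref{eq:gH_B}) positive definite and thus invertible.

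For the primal cone membership $x\in\K$, I would regroup the formula as $x = B(A^\T y)\,z$ with
\[
z \defeq A^\T\bigl(AB(A^\T y)A^\T\bigr)^{-1}b = A^\T B_A(y)^{-1}b.
\]
Since $y$ is a B-certificate of $b$ with respect to $\K_A$ and $B_A(y) = AB(A^\T y)A^\T$, \cref{def:dual-certificates-B-and-H} gives $B_A(y)^{-1}b\in\KAs$; then the characterization $\KAs = \{u \mid A^\T u \in \Ks\}$ from \eqref{eq:Ax-cone-with-dual} yields $z\in\Ks$. Applying \cref{thm:B-cert} at the interior point $A^\T y\in\Ksc$ to the vector $z\in\Ks$ delivers $B(A^\T y)z\in\K$, i.e., $x\in\K$, as required.

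The H-certificate version follows from the same argument with every $B$ replaced by $H$, using \eqref{eq:gH_H} in place of \eqref{eq:gH_B} and invoking the second (hyperbolicity-based) half of \cref{thm:B-cert} in the final step. I do not foresee a substantive obstacle; the only delicate point is the bookkeeping of which objects live in which cone---primal versus dual, and the cone $\K$ versus its image $\K_A$---and in particular the verification that $A^\T y\in\Ksc$ so that all invocations of barrier-function identities and of \cref{thm:B-cert} are legitimate.
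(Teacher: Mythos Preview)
Your proposal is correct and follows essentially the same route as the paper: verify invertibility of $AB(A^\T y)A^\T$ via $A^\T y\in\Ksc$ and full row rank of $A$, read off $Ax=b$ algebraically, then use the B-certificate condition $B_A(y)^{-1}b\in\KAs$ together with \eqref{eq:Ax-cone-with-dual} to place $z=A^\T B_A(y)^{-1}b$ in $\Ks$ and finish with \cref{thm:B-cert}. The only cosmetic difference is that you invoke \cref{def:dual-certificates-B-and-H} directly for $B_A(y)^{-1}b\in\KAs$, whereas the paper phrases that step as an application of \cref{thm:B-cert} to $\K_A$; your phrasing is arguably cleaner, but the argument is the same.
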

\begin{proof}
We first note that $x$ is well-defined: $H(A^{\T}y)$ is well-defined and invertible, since $y\in(\K_A^*)^\circ$ and $A^\T y\in(\Ks)^\circ$; additionally, $A$ is of full row-rank, which ensures that the matrix $\left(AH(A^\T y)A^\T\right)$ inverted in the formula is indeed invertible.

The first of our two claims, $Ax=b$, is immediate from simple matrix arithmetic.

To show that $x\in\K$, first note $B_A(y)$ is also invertible. Then we can apply Eqs.~\eqref{eq:gH_H}--\eqref{eq:gH_B} and \cref{thm:B-cert} to the cone $\KAs$ and its LHSCB $f_A$ and see that
\begin{equation*}
B_A(y)^{-1}b = (AB(A^\T y)A^\T)^{-1}b \in \KAs.
\end{equation*}
Then, by definition, we have
\begin{equation*}
A^\T(AB(A^\T y)A^\T)^{-1}b \in \Ks,
\end{equation*}
and using \cref{thm:B-cert} again, we obtain the relation
\begin{equation*}
B(A^\T y) A^\T(AB(A^\T y)A^\T)^{-1}b \in \K.
\end{equation*}
Finally, note that
\[B(A^\T y) A^\T(AB(A^\T y)A^\T)^{-1} = H(A^\T y) A^\T(AH(A^\T y)A^\T)^{-1}\]
by repeated application of \eqref{eq:gH-identities} in Proposition \ref{thm:SCB-properties}.
\end{proof}

Note that Theorem \ref{thm:reconstruction} does not require a separate formula for H-certificates, since (for cones that admit H-certificates) every H-certificate is automatically a B-certificate.

The formula \eqref{eq:reconstruction-formula} converts the dual certificate $y$ into a conventional, ``primal'' certificate of $b$'s membership in $\K_A$. We note, however, that the relation $b\in\K_A$ is already rigorously certified by the relation $B_A(y)^{-1}b \in \KAs$, and the computation of the primal certificate $x$ is not necessary to produce or verify a dual certificate. On the contrary, dual certificates allow us to optimize directly over $\K_A$ and verify the feasibility of the computed (near-)optimal solutions without resorting to computing and verifying the membership of any vector in $\K$, which is useful whenever $\dim(\K_A)\ll\dim(\K)$. We detail the algorithmic aspects of these questions in Section \ref{sec:computing}.

\smallskip

\noindent \textbf{Primal reconstruction and weighted least-squares.} The formula \eqref{eq:reconstruction-formula} can be interpreted as the optimal solution of the equality-constrained (weighted) least-squares problem
\begin{equation}
\label{eq:least-squares-x}
    \min_x \left\{\left\|-g(A^\T y)-x\right\|_{H(A^\T y)^{-1}}\,\middle|\, Ax=b\right\},
\end{equation}
where $y\in\Ksc$ is a given vector. Straightforward arithmetic involving \eqref{eq:gH_H} (without any reliance on Proposition~\ref{thm:SCB-properties}) shows that if $\lambda$ denotes the Lagrange multipliers associated with the equality constraints, then the optimality conditions of \eqref{eq:least-squares-x} yield the optimal multipliers
\[\lambda^* = y-H_A(y)^{-1}b\]
and the optimal (primal) solution
\begin{equation}
\label{eq:least-squares-x-OC-x}
x^* = H(A^\T y)A^\T H_A(y)^{-1}b,
\end{equation}
exactly as in the reconstruction formula \eqref{eq:reconstruction-formula}.

The interpretation of \eqref{eq:least-squares-x} is fairly clear. The range of $-g$ is $\Kc$, in which we wish $x$ to belong (recall the first claim of Proposition \ref{thm:SCB-properties}); thus, this problem seeks to find an $x$ satisfying $Ax=b$ that is close (in a suitable, $y$-dependent norm) to a given vector that is already in $\Kc$.

For further insight, note that the optimal value of \eqref{eq:least-squares-x} is
\[ \left\|-g(A^\T y)-x^*\right\|_{H(A^\T y)^{-1}} = \left\|-g_A(y)-b\right\|_y^*, \]
keeping in mind that the dual local norm $\|\cdot\|^*_y$ now corresponds to the barrier $f_A$.

We can use these observations to paint a more geometric picture, and to derive easy-to-use sufficient conditions for dual certification. For this, we briefly recall the notion of Dikin ellipsoids.

\begin{definition}
Given a proper convex cone $\C\subseteq\R^n$ whose interior is the domain of an LHSCB, we define the local (open) ball centered at $x\in\Cc$ with radius $r$ as $\mathscr{B}(x,r) \defeq \{v\in\R^n\,|\, \|v-x\|_{x} < r\}$. These balls are called \emph{Dikin ellipsoids}. In the same vein, for a $y\in\Csc$ we have the dual Dikin ellipsoid of radius $r$ defined as $\mathscr{B}^*(y,r) \defeq \{v\in\R^n\,|\, \|v-y\|_{y}^* < r\}$.
\end{definition}

It is well-known that Dikin ellipsoids of radius $1$ stay entirely within their respective cones \cite[Chapter 2]{Renegar2001}. Applying this to $\K_A$ and $\K$, we see that if
\begin{equation}
\label{eq:dualcert-dikin}
    \min_x \left\{\left\|-g(A^\T y)-x\right\|_{H(A^\T y)^{-1}}\,\middle|\, Ax=b\right\} = \left\|-g_A(y)-b\right\|_y^* < 1,
\end{equation}
then $b\in\K_A$, moreover, if $\Ks$ is a hyperbolicity cone, this is also proven by the fact that its preimage $x$ given by the reconstruction formula belongs to $\K$. That is, the condition \eqref{eq:dualcert-dikin} is a sufficient (but not necessary!) condition for a dual vector to be an H-certificate of $b$.

We can state this result more generally, for arbitrary convex cones instead of cones of the form $\K_A$. In the next section, we explore this connection between Dikin ellipsoids and H- and B-certificates further.

\section{Dikin ellipsoids and dual certificates}
\label{sec:Dikin}

Throughout this section, we consider again a general proper convex cone $\K$, assuming only that $\Ksc$ is the domain of a $\nu$-LHSCB. 

\begin{theorem}
\label{thm:Dikin-sufficient-H}
Suppose $y\in\Ksc$ satisfies the relation $\|-g(y) - b\|_y^* < 1$ or (equivalently, using the primal local norm) $\|y - H(y)^{-1}b\|_y < 1$. Then $y$ is an H-certificate for $b\in\K$.
\end{theorem}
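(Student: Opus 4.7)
My plan is to recognize that the hypothesis is precisely the statement that $H(y)^{-1}b$ lies in the open primal Dikin ellipsoid of radius $1$ centered at $y$ (with respect to the barrier $f$ on $\Ks$), and then to invoke the standard Dikin containment result to conclude $H(y)^{-1}b\in\Ks$, which is exactly the definition of $y$ being an H-certificate for $b$.

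First I would verify, for concreteness, that the two stated norm conditions really are equivalent, as asserted in the theorem. Setting $v \defeq y - H(y)^{-1}b$, the identity $H(y)y = -g(y)$ from \eqref{eq:gH-identities} gives $H(y)v = -g(y) - b$. Since for any vector $u$ we have $\|u\|_y^2 = u^\T H(y) u = (H(y)u)^\T H(y)^{-1}(H(y)u) = \|H(y)u\|_y^{*2}$, applying this with $u = v$ yields $\|y - H(y)^{-1}b\|_y = \|-g(y)-b\|_y^*$, so either inequality in the hypothesis is equivalent to $H(y)^{-1}b \in \mathscr{B}(y,1)$, the open Dikin ball around $y$ in $\Ks$.

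Next I would invoke the standard Dikin ellipsoid containment theorem applied to the cone $\Ks$ with its LHSCB $f$: the open unit Dikin ball $\mathscr{B}(y,1) = \{v\,|\,\|v-y\|_y < 1\}$ is contained in $\Ks$ (this is \cite[Chapter 2]{Renegar2001}, which the paper has already cited in Section~3). Combining with the previous paragraph, we get $H(y)^{-1}b \in \Ks$, which by Definition~\ref{def:dual-certificates-B-and-H} is precisely the statement that $y$ is an H-certificate for $b$.

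There is no real obstacle here: the result is essentially a one-line consequence of the standard Dikin containment, once one has reformulated the hypothesis using the elementary norm identity above. The only mild piece of bookkeeping is the conversion between the primal and dual local norms, which is immediate from the identities collected in \eqref{eq:gH-identities}.
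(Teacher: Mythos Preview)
Your proposal is correct and follows essentially the same route as the paper: the paper's proof simply reads ``Identical to the last argument of the previous section,'' referring to the Dikin-ellipsoid containment argument at the end of Section~\ref{sec:projected-cones}, which is exactly what you spell out. Your explicit verification of the equivalence of the two norm conditions via $H(y)y=-g(y)$ is a welcome bit of detail that the paper leaves implicit.
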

\begin{proof}
Identical to the last argument of the previous section.
\end{proof}

The converse is not true in general, that is, the cone generated by Dikin ellipsoids may be a proper subcone of the cone of H-certificates. We can also use Dikin ellipsoids to derive analogous sufficient conditions for a vector to be a B-certificate:

\deletethis{
\begin{definition}[Dikin certificates]
\label{def:dualcert-dikin}
We say that $x\in\Ksc$ is an \emph{ellipsoidal Dikin certificate} of $b$ if $b \in \mathscr{B}^*(-g(x),1)$. This relation can also be written, equivalently, as $\|-g(x) - b\|_x^* < 1$ or (using the primal local norm) as $\|x - H(x)^{-1}b\|_x < 1$.

Similarly, $x\in\Ksc$ is a \emph{conic Dikin certificate} of $b$ if for some $\delta>0$ it satisfies the relation $b \in \mathscr{B}^*(-g(\delta x),1)$.
\end{definition}

From the form $\|x - H(x)^{-1}b\|_x < 1$, we can immediately see that every Dikin certificate is an H-certificate:
\begin{lemma}
If $x$ is an (ellipsoidal or conic) Dikin certificate of $b\in\Ks$, then $x$ is also an H-certificate for $b\in\Ks$.
\end{lemma}
The converse is not true, that is, conic Dikin certificates are proper subcones of the cone H-certificates. We can also use Dikin ellipsoids to derive analogous sufficient conditions for a vector to be a B-certificate:
}

\begin{theorem}
\label{thm:Dikin-sufficient-B}
Let $\Ksc$ be the domain of a $\nu$-LHSCB, and suppose that $b\in\K$ and $y\in\Ksc$ satisfy the relation $b \in \mathscr{B}^*(-g(\delta y),r)$ for some $\delta>0$ and $r=\frac{1}{2(\nu+1)}$. Then $y$ is a B-certificate of $b$.
\end{theorem}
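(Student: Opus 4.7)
My plan is to reduce to the case $\delta=1$ and then show that $B(y)^{-1}b$ lies inside the unit Dikin ellipsoid centered at the point $\frac{1}{1+\nu}y\in\Ksc$, which by the standard Dikin-ball property places $B(y)^{-1}b$ in $\Ksc\subseteq\Ks$, so that $y$ is a B-certificate by definition.

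The first step is a scaling argument. Since B-certificates are closed under positive scaling (the set $\BC(b)$ from \cref{thm:full-dimensional-certificate} is a cone), $y$ certifies $b$ iff $\delta y$ does. Replacing $y$ by $\delta y$ and using $g(\delta y)=\delta^{-1}g(y)$, $H(\delta y)=\delta^{-2}H(y)$, the hypothesis $b\in\mathscr{B}^*(-g(\delta y),r)$ becomes $\|-g(y)-b\|_y^*<r$, so I may assume $\delta=1$. Set $\Delta\defeq b+g(y)$, so the hypothesis is $\|\Delta\|_y^*<r=\tfrac{1}{2(\nu+1)}$.

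Next, I would use the closed form of $B(y)^{-1}$ from \eqref{eq:Binv} together with the identity $H(y)^{-1}g(y)=-y$ and $y^\T g(y)=-\nu$ from \eqref{eq:gH-identities} to derive the key formula
\[B(y)^{-1}b \;=\; \frac{1-y^\T\Delta}{1+\nu}\,y + H(y)^{-1}\Delta,\]
from which the natural center of our Dikin ball emerges by letting $\Delta\to 0$: it is $\frac{1}{1+\nu}y$, not $y$. Indeed, the displacement from this center is $-\tfrac{y^\T\Delta}{1+\nu}\,y + H(y)^{-1}\Delta$, whose squared $\|\cdot\|_y$-norm expands (using $\|y\|_y^2=\nu$ and the cancellation of the cross term with the $y\otimes y$ piece) to
\[\bigl\|B(y)^{-1}b-\tfrac{1}{1+\nu}y\bigr\|_y^2 \;=\; (\|\Delta\|_y^*)^2 \;-\; \frac{\nu+2}{(1+\nu)^2}(y^\T\Delta)^2 \;\le\; (\|\Delta\|_y^*)^2.\]

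Finally, logarithmic homogeneity gives $H(\tfrac{1}{1+\nu}y)=(1+\nu)^2 H(y)$, so the primal local norm at the center $\frac{1}{1+\nu}y$ is exactly $(1+\nu)$ times $\|\cdot\|_y$. Combining with the bound above and the hypothesis $\|\Delta\|_y^*<r$, I obtain
\[\bigl\|B(y)^{-1}b-\tfrac{1}{1+\nu}y\bigr\|_{\frac{1}{1+\nu}y} \;\le\; (1+\nu)\,\|\Delta\|_y^* \;<\; (1+\nu)r \;=\; \tfrac{1}{2} \;<\; 1,\]
so $B(y)^{-1}b$ lies in the unit Dikin ball around $\frac{1}{1+\nu}y\in\Ksc$, hence in $\Ksc$. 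The main obstacle I anticipate is choosing the right center for the Dikin ball: anchoring at $y$ itself fails because $B(y)^{-1}(-g(y))=\frac{1}{1+\nu}y$ (not $y$), so the leading term of $B(y)^{-1}b$ is shrunken by a factor of $1+\nu$; this is what forces both the particular center $\frac{1}{1+\nu}y$ and the resulting $\nu$-dependent threshold $r=\frac{1}{2(\nu+1)}$.
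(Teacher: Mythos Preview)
Your proof is correct and takes a genuinely different (and cleaner) route than the paper's. Both arguments reduce to $\delta=1$ and both ultimately verify $B(y)^{-1}b\in\Ks$ via a Dikin-ball containment, but the mechanics differ. The paper introduces a $b$-dependent scalar $\lambda=\frac{1+\nu}{1+\nu-y^\T b}$ and shows that $\lambda B(y)^{-1}b$ lies in the unit Dikin ball centered at $y$; this requires first proving $\lambda>0$, which in turn forces an intermediate estimate on $y^\T b$ via the triangle inequality and \eqref{pt1}. You instead leave $B(y)^{-1}b$ unscaled and recenter the Dikin ball at $\frac{1}{1+\nu}y$, exploiting the exact expansion of $B(y)^{-1}b$ in terms of $\Delta=b+g(y)$ together with the homogeneity $H(\tfrac{1}{1+\nu}y)=(1+\nu)^2H(y)$. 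The payoff is twofold: your computation is shorter (no bound on $y^\T b$ is needed, and the cross term in the squared norm cancels exactly to give the clean identity $(\|\Delta\|_y^*)^2-\tfrac{\nu+2}{(1+\nu)^2}(y^\T\Delta)^2$), and your bound is sharper---your final inequality only requires $(1+\nu)\|\Delta\|_y^*<1$, so the argument in fact establishes the conclusion for any $r<\tfrac{1}{1+\nu}$, whereas the paper's chain of estimates needs the full factor of two in $r=\tfrac{1}{2(1+\nu)}$ to arrive at $\|y-\lambda B(y)^{-1}b\|_y\le\tfrac{16}{23}<1$.
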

\begin{proof}
It suffices to prove our theorem for $\delta=1$, since $\BC(b)$ is a cone. We will show our claim by arguing that in this case,
\begin{equation*}
\lambda = \frac{1+\nu}{1+\nu - y^\T b}
\end{equation*}
is a positive scalar for which $\norm{y - \lambda B(y)^{-1} b}_y < 1,$ therefore $y$ is a B-certificate for $\lambda b$.

To lighten the notation, we shall use the shorthands $g$, $H$, and $B$ for $g(y)$, $H(y)$, and $B(y)$, respectively.

Note that
\[0 \leq \norm{y - H^{-1}b}_y^2 = y^\T H y + b^\T H^{-1}b - 2y^\T b.\]

    By the definition of the dual local norm and the fact that $y^\T H y \stackrel{\eqref{eq:gH-identities}}{=} \nu$, we get
    \begin{equation}
    \label{pt1}
        y^\T b
        \leq \frac{1}{2}(y^\T H y + b^\T H^{-1}b)
        = \frac{1}{2}(\nu + {\norm{b}_y^*}^2).
    \end{equation}
    Also by the assumption $b \in \mathscr{B}^*(-g(y),r)$ and the fact that $\norm{g}_y^* = \sqrt{g^\T H^{-1} g} = \sqrt{\nu}$ we obtain
    \begin{align*}
        \norm{b}_y^*
        &\leq \norm{-g}_y^* + \norm{b - (-g)}_y^*\\
        &= \norm{-g}_y^* + \norm{y - H^{-1}b}_y\\
        &\leq \sqrt{\nu} + r.
    \end{align*}
    Therefore, (\ref{pt1}) implies
    \[
        y^\T b
        \leq \frac{1}{2}(\nu + {\norm{b}_y^*}^2)
        \leq \frac{1}{2}(\nu + (\sqrt{\nu} +r)^2)
        = \nu + \sqrt{\nu} r + \frac{1}{2} r^2,
    \]
    and we get
    \begin{equation*}
        1 + \nu - y^\T b \geq 1 - \sqrt{\nu} r - \frac{1}{2} r^2 = 1- \frac{\sqrt{\nu}}{2(1+\nu)} - \frac{1}{8(1+\nu)^2} \geq \frac{23}{32} > 0.
    \end{equation*}
    Thus we can conclude that $\lambda$ is positive; indeed
    \begin{equation*}
        0 < \lambda \leq \frac{32}{23}(1+\nu).
    \end{equation*}
    From this, using \eqref{eq:Binv} and the definition of $\lambda$, we also obtain
    \begin{align*}
        \norm{y - B^{-1}\lambda b}_y
        &= \norm{y - H^{-1}\lambda b + \frac{\lambda}{1 + \nu}yy^\T b}_y\\
        &= \norm{\lrp{1 + \frac{\lambda}{1 + \nu}y^\T b}y - H^{-1}\lambda b}_y\\
        &= \lambda\norm{y - H^{-1}b}_y\\
        &\leq \frac{32}{23} \frac{1+\nu}{2(1+\nu)}  <1,
    \end{align*}
completing the proof.
\end{proof}
We underscore that \cref{thm:Dikin-sufficient-B} is only a sufficient, but not necessary, condition for a vector to be a B-certificate.

\section{Computing dual membership certificates}
\label{sec:computing}
We now turn to the question of efficiently deciding whether $b \in \Kc$ holds and computing an H- or B-certificate if it does. In addition to assuming that $\Ksc$ is the domain of an efficiently computable LHSCB and that we have a known $\vy\in\Ksc$, we also assume that a vector $w \in \Kc$ is given. (The latter is essentially without loss of generality; e.g., we can always choose $\vw=-g(y)\in\Kc$.) Given such a $w$, the vector $b - \alpha w$ also belongs to $\Kc$ for every sufficiently negative $\alpha$. Furthermore, $b \in \K$ holds if and only if there exists a nonnegative $\alpha$ such that $b - \alpha w \in \K$. Based on this observation, we can consider the following optimization problem and its dual:
\begin{equation} \label{eq:membership_opt}
\begin{aligned}
    & \max_{\vz, \alpha} & & \alpha \\
    &  \ \st & & \alpha \vw + \vz = \vb \\
    & & & \vz \in \K, \ \alpha \in \R
\end{aligned}
\qquad \qquad
\begin{aligned}
    & \min_y & & b^\T y \\
    &  \ \st & &w^\T y = 1 \\
    & & & y \in \mathcal{K}^*
\end{aligned}
\end{equation}

It is clear from our assumptions that strong duality holds with attainment on both the primal and dual side, since both problems have a Slater point. \revision{The optimal objective function value $\alpha^*$ of \eqref{eq:membership_opt} is positive if and only if $b \in \Kc$.} However, standard duality theory and an approximately optimal numerical solution of \emph{either problem} cannot rigorously certify this fact: a strictly feasible dual solution $\vy$ can be scaled to exactly satisfy the only linear constraint $\vw^\T\vy=1$, and thus we can obtain a true, verifiable dual feasible solution, however, the corresponding objective value is an \emph{upper bound} on the exact optimal value, therefore, a nonnegative dual value implies nothing. On the primal side we get a lower bound, and so an exact primal feasible $(\vz,\alpha)$ with $\alpha\geq 0$ would be a rigorous certificate, but numerical methods for problems of this form (interior-point algorithms in particular) tend to yield only approximately feasible primal solutions, meaning that only a vector close to $\vb$ is certified to be in the cone. In contrast, any $\vy\in\BC(\vb)$ (or $\vy\in\HC(\vb)$ if $\Ks$ is a hyperbolicity cone) does rigorously certify $b\in\K$. Our goal is to compute such a dual certificate $\vy$.

The problem \eqref{eq:membership_opt} is a \emph{nonsymmetric convex conic optimization problem}, with the particular difficulty that no assumptions are made about the cone $\K$. There are only a handful of primal-dual interior-point algorithms (IPAs) developed for this general class of problems, which require only one LHSCB but simultaneously compute near-optimal primal and dual solutions; see, for example, \cite{SkajaaYe2015, PappYildiz2022, PappVarga2025, CoeyKapelevichVielma2023}).\footnote{In these works it is assumed that the \emph{primal} cone $\K$ admits a tractable LHSCB, with no assumptions on the \emph{dual} cone $\Ks$. We made the opposite assumption for \eqref{eq:membership_opt} to maintain consistency with the rest of this paper. Since these methods solve the primal and dual problems simultaneously, this is only a notational difference.} All of these are suitable tools for numerically addressing the decision problem of whether $b \in \K$. However, if we also aim to obtain dual certificates, it appears that short-step IPAs that operate within a close neighborhood of the central path are the most appropriate methods. This is due to the relationship between the definition of this neighborhood and the sufficient conditions we derived for dual certificates in \cref{thm:Dikin-sufficient-H,thm:Dikin-sufficient-B}.

\deletethis{
To make this more precise, let us denote the strict primal-dual feasible set of \eqref{eq:membership_opt} by $\mathcal{F}^{\circ}$, that is,
\[ \mathcal{F}^{\circ} = \{ (z,\alpha,y) \in \Kc \times\R\times \Ksc \mid \alpha w + z = b, w^\T y = 1 \}.\]
In our recent paper \cite{PappVarga2025}, we proposed IPAs operating in the neighborhood
\begin{equation*}
    \mathcal{N}(\eta,\tau) = \left\{ (z,\alpha,y) \in \mathcal{F}^\circ : \| y - H(y)^{-1} (z/ \tau) \|_{y} \leq \eta \right\},
\end{equation*}
where $\eta \in (0,1)$ is the neighborhood radius and $\tau$ is the so-called central path parameter.
Similar neighborhoods of the central path were used, for example, in \cite{Serrano2015,SkajaaYe2015}.
}

To make this more precise, consider for a moment a general conic optimization problem in standard form
\begin{equation} \label{eq:std_conic_PD}
\begin{aligned}
    & \inf_{\vx} & & \vc^\T\vx \\
    &  \ \st & & \vA\vx = \vb \\
    & & & \vx \in \K
\end{aligned}
\qquad \qquad
\begin{aligned}
    & \sup_\vy & & \vb^\T\vy \\
    &  \ \st & &\vA^\T\vy+\vs=\vc\\
    & & & \vs\in\Ks
\end{aligned}
\end{equation}
whose strict feasible set is
\[
\mathcal{F}^{\circ} \defeq \left\{ (\vx, \vy, \vs)\in\mathcal{K}^\circ\times\mathbb{R}^m\times(\mathcal{K}^*)^\circ \,\middle|\, A\vx = \vb, \ \vA^{\T}\vy + \vs = \vc \right\}.
\]
In our recent paper \cite{PappVarga2025}, we proposed IPAs for solving \eqref{eq:std_conic_PD} that requires only an LHSCB for the \emph{primal} cone $\K$ and operates in the neighborhood
\begin{equation}
\label{eq:nbd}
\mathcal{N}(\eta,\tau) = \left\{ (\vx, \vy, \vs) \in \mathcal{F}^\circ\mid \| \vs + \tau g(\vx) \|_{\vx}^* \leq \eta \tau \right\},
\end{equation}
where $\eta \in (0,1)$ is the neighborhood radius (a fixed parameter throughout the algorithm, chosen by the user) and $\tau$ is the central path parameter (which tends geometrically to $0$ as we approach optimality). Similar neighborhoods are used, for example, in \cite{Nesterov2018,Serrano2015,SkajaaYe2015}.

When specialized to \eqref{eq:membership_opt} (keeping in mind that $\Ks$ plays the role of $\K$!), the strict feasible region and neighborhood defined above simplify to
\begin{align*}
\mathcal{F}^{\circ} &= \{ (z,\alpha,y) \in \Kc \times\R\times \Ksc \mid \alpha w + z = b, w^\T y = 1 \} \text{ and }\\
\mathcal{N}(\eta,\tau) &= \left\{ (z,\alpha,y) \in \mathcal{F}^\circ\mid \| y - H(y)^{-1} (z/ \tau) \|_{y} \leq \eta \right\}.
\end{align*}

Recall that in Section \ref{sec:Dikin}, we derived the following sufficient conditions for B- and H-certificates, which are nearly identical to the formula defining $\mathcal{N}$:
\begin{align*}
&\norm{y-H(y)^{-1}(b/\tau)}_y < 1 \qquad\quad\;\implies y\in\HC(b) \textrm{ and}\\
&\norm{y-H(y)^{-1}(b/\tau)}_y < \frac{1}{2(1+\nu)} \implies  y\in\BC(b)
\end{align*}
for any $\tau > 0$. Therefore, if we have a strictly feasible solution $(z,\alpha,y)\in\mathcal{F}^{\circ}$ to the optimization problem \eqref{eq:membership_opt} with $(z,\alpha,y) \in \mathcal{N}(\eta,\tau)$ for any $\tau > 0$ and $\eta < 1$, then the dual feasible solution $y$ is an H-certificate for $b-\alpha w$. If $\eta < 1/(2(1+\nu))$ holds, then $y$ is a B-certificate for $b-\alpha w$. Together with $\alpha \geq 0$, this certifies $b \in \K$ in both cases.

In conclusion, if $\vb\in\Kc$ and we use the IPAs proposed in \cite{PappVarga2025} (or any other polynomial IPA operating in a similar Dikin-ellipsoid based neighborhood of the central path) to solve \eqref{eq:membership_opt}, we can efficiently compute a dual membership certificate.

Another desirable property of this approach is that even dual feasible solutions to \eqref{eq:membership_opt} that are far from optimal (but are near the central path) can be meaningful, interpreting $\alpha$ as the distance from the boundary of the cone in the direction $\vw$, since every iterate $(\vz,\alpha,\vy)$ of a feasible IPA that operates within the neighborhood defined above is a rigorously certified bound $\alpha$ on the optimal value, with a dual certificate $\vy$. For instance, in the polynomial optimization problems considered in Section \ref{sec:POP}, we can choose the constant $1$ polynomial as $\vw$ and compute a sequence of rigorously certified lower bounds on a given polynomial. (We elaborate on this application in Section \ref{sec:POP}.)

\section{Near-optimal primal feasible solutions}
\label{sec:optimization}

We now turn our attention from certifying membership (and the associated optimization problem \eqref{eq:membership_opt}) to general convex optimization problems in standard conic form \eqref{eq:std_conic_PD}.

The main result of this section is that as long as we employ short-step dual or primal-dual IPAs to solve these problems (an even broader class of algorithms than those considered in the previous section), \emph{an exact feasible solution $\vx$ to the primal problem can be computed in closed form from any dual feasible solution $\vy$ near the central path}, with a guarantee that \emph{the closer $\vy$ is to optimality, the closer the computed $\vx$ is to optimality}.

To see how this can be algorithmically useful, besides being of independent theoretical interest, let us reformulate \eqref{eq:std_conic_PD} using the notation
\[ \Kca \defeq \left\{\begin{pmatrix}\vc^\T\vx\\-\vA\vx\end{pmatrix}\,\middle|\,\vx\in\K\right\} \]
and an auxiliary variable $\gamma$, as follows:
\begin{equation} \label{eq:PD_w_Kca}
\begin{aligned}
    & \inf_{\gamma\in\R} & & \gamma \\
    &  \ \st & & \begin{pmatrix}\gamma\\-\vb\end{pmatrix} \in \Kca
\end{aligned}
\qquad \qquad
\begin{aligned}
    & \sup_{\vy\in\Rm} & & \vb^\T\vy \\
    &  \ \st & & \begin{pmatrix}1\\ \vy\end{pmatrix} \in \Kcas
\end{aligned}
\end{equation}

The problems \eqref{eq:std_conic_PD} and \eqref{eq:PD_w_Kca} are almost equivalent: they have the same optimal values, and their dual feasible regions are also clearly the same; however, \eqref{eq:PD_w_Kca} eliminates the variables $\vx$ and $\vs$, at the cost that the new primal optimization problem is now over a cone $\Kca$, for which we may not have an efficiently computable LHSCB even if $\K$ has one. The dual problem also involves a new cone, $\Kcas$. If $\vA$ has many more columns than rows, then the formulation \eqref{eq:PD_w_Kca} involves cones of much lower dimension, and may be solved more efficiently, than the original problem \eqref{eq:std_conic_PD}.

In addition to our previous assumption that $\Ksc$ is the domain of a known $\nu$-LHSCB $f_\Ks$, let us make following common regularity assumptions:
\begin{enumerate}[1)]
\item Both the primal and dual problems are feasible (therefore we have finite optimal values);
\item $\begin{pmatrix}\vc^\T\\-\vA\end{pmatrix}$ has linearly independent rows; and
\item $\Kcas$ has a nonempty interior.
\end{enumerate}
The last two assumptions ensure that $(\Kcas)^\circ$ is also the domain of a known $\nu$-LHSCB (\cref{thm:SCB-properties}, Item \ref{item:injective-LHSCB}), namely
\begin{equation}
\label{eq:fKcas}
f_{\Kcas}(y_0,\vy) \defeq f_\Ks(\vc y_0 - \vA^\T\vy).
\end{equation}

\revision{Now, suppose that we have an algorithm that we can apply to \eqref{eq:PD_w_Kca} and compute, using only the dual barrier $f_{\Kcas}$, the (approximately) optimal primal and dual values and a (near-)optimal dual feasible solution $\vy$ for this problem. As the optimal objective values and the dual feasible regions of \eqref{eq:PD_w_Kca} and \eqref{eq:std_conic_PD} are the same, this also solves the dual problem of \eqref{eq:std_conic_PD} without computing any of its primal feasible solutions}. When $\vA$ has many more columns than rows, circumventing the iterative optimization of $\vx$ in this manner can save computational time and memory \cite{PappYildiz2019,PappVarga2025}. On the other hand, many applications call for the actual primal solution to \eqref{eq:std_conic_PD}, not just the optimal value. As we shall see, in that case, we can still apply the same methods, and a primal feasible and (near-)optimal solution to the original problem \eqref{eq:std_conic_PD} can be computed in post-processing using our reconstruction formula, at little additional cost. In fact, throughout the algorithms, primal feasible solutions that serve as easily verifiable certificates of our progress towards optimality can be computed in each iteration at a negligible additional cost.

Mirroring our discussion in the Introduction on feasbility certificates, one could argue that these reconstructed feasible solutions are preferable to the numerically computed ones, since they are theoretically guaranteed to be exact if computed (as post-processing) in higher-precision or exact arithmetic. Alternatively, one may use this throughout the algorithm as a guarantee that the numerical method did not veer too far from optimality due to numerical problems (or more reliably detect numerical problems), without resorting to a method that runs entirely in exact or high-precision arithmetic.

In the remainder of this section, we cover primal-dual and dual-only methods separately, as they rely on slightly different neighborhood definitions.

\subsection{Exact primal reconstruction in primal-dual IPAs}
In this section, we discuss how exact primal feasible solutions can be reconstructed from dual feasible solutions in the context of the authors' primal-dual path-following IPA in \cite{PappVarga2025} and other methods that operate in the neighborhood \eqref{eq:nbd}.

The arguments for this case are essentially identical to the ones used in Section \ref{sec:computing}, after noting that \eqref{eq:PD_w_Kca} can also be equivalently written in the form of \eqref{eq:membership_opt}, with $\gamma$ playing the role of $\alpha$, the vector $\ve_0=(1,0,\dots,0)$ playing the role of $\vw$, and $(0,-\vb)$ playing the role of $\vA$. (The only difference is that $\ve_0$ does not necessarily belong to $(\Kca)^\circ$, but this is irrelevant now.)

With this change in notation, the summary of the previous section is that the application of IPAs using the neighborhood \eqref{eq:nbd} to the optimization problem \eqref{eq:PD_w_Kca} yields a sequence of iterates $(\gamma,\vy)$, each of which has the property that $(1,\vy)$ certifies the relation $(\gamma,-\vb)\in\Kca$, that is, it certifies the existence of an $\vx\in\K$ satisfying $\vA\vx=\vb$ and $\vc^\T\vx=\gamma$ without ever computing such an $\vx$ in the process. We may compute such a primal vector $\vx$ using our reconstruction theorem (\cref{thm:reconstruction}), with $f_{\Kcas}$ in place of $f$, $(1,\vy)$ in place of $\vy$ and $(\gamma,-\vb)$ in place of $b$. And since these IPAs require the computation of the gradient and the factorization of the Hessian of the barrier in each iteration, the additional computation represented by the reconstruction formula \eqref{eq:reconstruction-formula} does not add a substantial computational cost. The bulk of the additional cost is due to matrix-vector multiplications that are cheaper than a single iteration of the IPA.

\subsection{Exact primal reconstruction in dual IPAs}
It may appear unsurprising that a primal feasible solution $\vx$ of \eqref{eq:std_conic_PD} can be reconstructed from a suitable corresponding primal feasible solution of \eqref{eq:PD_w_Kca}, but we emphasize that this is only made possible with a suitable dual certificate $\vy$. To further underscore this point, and to make our results more generally applicable, in this section we show that \emph{dual-only} methods applied to \eqref{eq:PD_w_Kca} that make no reference to the primal cone $\Kca$ can also be ``retrofitted'' to compute exact primal feasible solutions from suitable dual feasible solutions.  Our approach is applicable to all short-step, path-following, dual IPAs (or primal IPAs applied to our dual problem).

When applied to \eqref{eq:PD_w_Kca}, such methods generate a sequence of dual vectors $\vy$ in a neighborhood of the \emph{dual central path} $\{\vy_\tau\,|\,\tau>0\}$, where $\vy_\tau$ is the unique minimizer of the problem
\begin{equation}
\label{eq:ytau}
y_\tau \defeq \arg\min \left\{\tau f_{\Kcas}(y_0,\vy)-\vb^\T\vy\,\middle|\,y_0=1,(y_0,\vy)\in(\Kcas)^\circ\right\},
\end{equation}
and the neighborhood is defined in terms the local norm associated with the dual barrier. Often, the central path parameter $\tau$ in \eqref{eq:ytau} is equal to or bounded from below by the scaled duality gap $\mu =(\vc^\T\vx-\vb^\T\vy)/\nu$ at each iteration, and, crucially,  $\vb^\T\vy_\tau$ is within $\nu\tau$ of the optimal objective function value -- this is what guarantees linear convergence of the methods in the objective function. (See \cite[Theorem 5.3.10]{Nesterov2018} for a classic example.) We have the following general scheme for primal reconstruction in dual methods that follow the above principle.
\begin{theorem}
\label{thm:reconstruction-optimization}
Assume that $(1,\vy)\in(\Kcas)^\circ$ (equivalently, that $(\vy,\vs)$ is a dual feasible solution to \eqref{eq:std_conic_PD} for some $\vs\in\Kc$), and \revision{let $\gamma\in\R$ be an arbitrary scalar.} Denote the Hessian of the $\nu$-LHSCB $f_\Ks$ by $H_{\Ks}$. 
Then the vector
\begin{equation}\label{eq:reconstruction-formula-optimization}
\vx_\gamma \defeq H_{\Ks}(\vups)\vA_c^\T\left(\vA_cH_{\Ks}(\vups)\vA_c^\T\right)^{-1}\vb_\gamma,
\end{equation}
where
\begin{equation*}
\vA_c = \begin{pmatrix}\vc^\T\\-\vA\end{pmatrix}
\text{, }\quad
\vb_\gamma=\begin{pmatrix}\gamma\\-\vb\end{pmatrix}
\text{, \; and }\;
\vups=\vA_c^\T\begin{pmatrix}1\\\vy\end{pmatrix}
\end{equation*}
satisfies $\vA\vx_\gamma=\vb$ and $\vc^\T\vx_\gamma=\gamma$. If, in addition, $\vy$ is sufficiently close to the unique minimizer $\vy_\tau$ of \eqref{eq:ytau} and we choose $\gamma=\vb^\T\vy+\nu\tau$, then $\vx_\gamma\in\K$, that is, $\vx_\gamma$ is a primal feasible solution of \eqref{eq:std_conic_PD} whose objective value is no more than $\nu\tau$ away from optimal.

\end{theorem}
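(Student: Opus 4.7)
The plan is to recognize that \eqref{eq:reconstruction-formula-optimization} is just the reconstruction formula of \cref{thm:reconstruction} specialized to the cone $\Kca$ with dual LHSCB $f_{\Kcas}$ and dual vector $(1,\vy)$. The first assertion, $\vA\vx_\gamma=\vb$ and $\vc^\T\vx_\gamma=\gamma$, is then immediate: \cref{thm:reconstruction} guarantees $\vA_c\vx_\gamma=\vb_\gamma$ (no appeal to $(1,\vy)$ being a \emph{valid} certificate is needed for that claim), and the two block rows unpack to the desired scalar and vector equations. Crucially, this holds for every $\gamma\in\R$, independent of optimality considerations.

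For $\vx_\gamma\in\K$, I must show that $(1,\vy)$ is a B-certificate of $\vb_\gamma$ with respect to $f_{\Kcas}$, so that the second half of \cref{thm:reconstruction} applies. The key observation is that at $\vy=\vy_\tau$ with $\gamma^\star\defeq\vb^\T\vy_\tau+\nu\tau$, the scaled vector $(1,\vy_\tau)/\tau$ is precisely the gradient certificate of $\vb_{\gamma^\star}$. To see this, I would use the first-order optimality condition for \eqref{eq:ytau} (the gradient of the Lagrangian with respect to $\vy$ vanishing) to obtain $\tau\,\partial_{\vy}f_{\Kcas}(1,\vy_\tau)=\vb$, and combine it with Euler's identity $(1,\vy_\tau)^\T g_{\Kcas}(1,\vy_\tau)=-\nu$ from \eqref{eq:gH-identities} to read off the first component of $g_{\Kcas}(1,\vy_\tau)$ as $-\nu-\vy_\tau^\T\vb/\tau$. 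This gives
\begin{equation*}
-\tau g_{\Kcas}(1,\vy_\tau)=\bigl(\nu\tau+\vb^\T\vy_\tau,\,-\vb\bigr)=\vb_{\gamma^\star},
\end{equation*}
and then logarithmic homogeneity of $g_{\Kcas}$ rewrites this as $-g_{\Kcas}((1,\vy_\tau)/\tau)=\vb_{\gamma^\star}$, identifying $(1,\vy_\tau)/\tau$ as the gradient certificate of $\vb_{\gamma^\star}$.

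By \cref{thm:full-dimensional-certificate}, the gradient certificate lies in $\BC(\vb_{\gamma^\star})$; inspection of that proof shows that in fact $B_{\Kcas}((1,\vy_\tau)/\tau)^{-1}\vb_{\gamma^\star}=(1,\vy_\tau)/((\nu+1)\tau)\in\Ksc$ is interior, so the certification is strict. Since $\BC$ is closed under positive scaling (as $B(ty)=t^{-2}B(y)$), this also shows $(1,\vy_\tau)\in\BC(\vb_{\gamma^\star})$. The defining condition ``$B_{\Kcas}(y)^{-1}b\in\Kcas$'' is jointly continuous in $(y,b)$ and holds with strict interior at $((1,\vy_\tau),\vb_{\gamma^\star})$, and both the maps $\vy\mapsto(1,\vy)$ and $\vy\mapsto\vb_\gamma=(\vb^\T\vy+\nu\tau,-\vb)$ are continuous in $\vy$. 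Hence there is a neighborhood of $\vy_\tau$ throughout which $(1,\vy)\in\BC(\vb_\gamma)$, and \cref{thm:reconstruction} yields $\vx_\gamma\in\K$. The duality-gap bound is then immediate, as $\vc^\T\vx_\gamma-\vb^\T\vy=\gamma-\vb^\T\vy=\nu\tau$ sandwiches the optimal value in an interval of length $\nu\tau$. The hyperbolic case is verbatim with $H_\Ks$ replacing $B_\Ks$, using the H-certificate version of \cref{thm:reconstruction} and the identity $H(y_b)^{-1}b=y_b$ in place of the Sherman--Morrison computation. The main obstacle, and the only nontrivial step, is the openness/continuity argument above; a fully quantitative version would replace the abstract continuity with an explicit Dikin-radius estimate via \cref{thm:Dikin-sufficient-B}, but the qualitative ``sufficiently close'' formulation in the statement requires only the open-ness of the certificate condition.
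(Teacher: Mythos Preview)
Your proposal is correct and follows essentially the same route as the paper: compute $-\tau g_{\Kcas}(1,\vy_\tau)$ via the optimality conditions of \eqref{eq:ytau} and the Euler identity \eqref{eq:gH-identities}, recognize $(1,\vy_\tau)$ (up to scaling) as the gradient certificate of $\vb_{\gamma^\star}$, invoke \cref{thm:full-dimensional-certificate} and continuity to pass to nearby $\vy$, and then apply \cref{thm:reconstruction}. You are somewhat more explicit than the paper in noting that the continuity argument must track both the certificate $(1,\vy)$ and the certified vector $\vb_\gamma$ as functions of $\vy$, and your one notational slip---writing $\Ksc$ where $(\Kcas)^\circ$ is meant---is harmless.
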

\begin{proof}

The optimality conditions of \eqref{eq:ytau} yield the following partial characterization of $\vy_\tau$: the last $m$ components of the the gradient $g(\cdot)$ of $f_{\Kcas}$ are
\[ \frac{\partial f}{\partial\vy}f_{\Kcas}(1,\vy)\Big|_{\vy=\vy_\tau} = b/\tau. \]
To determine the missing zeroth component of the gradient, $g_0 = \frac{\partial f}{\partial y_0}f_{\Kcas}$, we once again rely on Eq.~\eqref{eq:gH-identities} from \cref{thm:SCB-properties}. We have
\[ \nu = -\begin{pmatrix}g_0\\ b/\tau\end{pmatrix}^\T \begin{pmatrix} 1\\ \vy_\tau\end{pmatrix} = -g_0 - \tau^{-1} b^\T \vy_\tau, \]
therefore, the barrier gradient satisfies
\begin{equation}
\label{eq:taugtau}
-\tau g(1,\vy_\tau) = \begin{pmatrix}\vb^\T\vy_\tau +\tau\nu \\ -\vb\end{pmatrix}.
\end{equation}
Recall from the proof of \cref{thm:full-dimensional-certificate} that Eq. \eqref{eq:taugtau} implies that $(1,\vy_\tau)$ is both an H- and B-certificate of the membership of the right-hand side vector in $\Kcas$, certifying the existence of a feasible solution $\vx$ in \eqref{eq:std_conic_PD} whose attained value is $\vc^\T\vx=\vb^\T\vy_\tau +\tau\nu$. From the same theorem, every vector $\vy$ sufficiently close to $\vy_\tau$ has the same property; that is, $(1,\vy)$ certifies the existence of a primal feasible solution $\vx$ whose objective function value is $\vb^\T\vy+\tau\nu$.\footnote{It is possible to make this statement and the remaining ``for sufficiently close...'' statements of this section more precise and quantitative using \cref{thm:Dikin-sufficient-H} and \cref{thm:Dikin-sufficient-B}, but we omit the details, as some path-following methods may use different neighborhoods anyway.}

We may construct such a primal vector explicitly using our reconstruction theorem (\cref{thm:reconstruction}), with $\vA_\vc$ in place of $\vA$, $\vups$ in place of $\vy$, and the vector $\vb_\gamma$ in place of $b$. The resulting formula is precisely Eq.~\eqref{eq:reconstruction-formula-optimization}.
\end{proof}

We note that because the reconstructed $\vx_\gamma$ only depends on $\gamma$ through the term $\vb_\gamma$, the vector $\vx_\gamma$ is an affine function of $\gamma$ that can be computed as easily as if $\gamma$ was a known scalar. (See \cref{ex:LP-reconstruction} for an illustration.) Thus, we can apply \cref{thm:reconstruction-optimization} ``parametrically'' to certify, using the current dual iterate $(1,\vy)$ as the dual certificate, a generic primal pair $(\gamma,-\vb)$, where $\gamma$ is an unknown scalar parameter.

We can then proceed in one of two ways: for some cones, such as the nonnegative orthant and the positive semidefinite cone, it is possible to find the best certifiable $\gamma$ directly from this affine expression. (For linear programming this is simple arithmetic, for semidefinite programming it leads to a generalized eigenvalue problem.)

If this is not possible or efficient enough, we may invoke the sufficient conditions found in Section \ref{sec:Dikin}. Comparing Eq.~\eqref{eq:taugtau} to those results, we see that every vector $(y_0,\vy)$ in a suitable Dikin ellipsoid centered at $(1,\vy_\tau)$ is a dual certificate of the relation $(\vb^\T\vy_\tau +\tau\nu, -\vb)\in\Kcas$, certifying that there exists a feasible solution $\vx$ in \eqref{eq:std_conic_PD} whose attained value is $\vc^\T\vx=\vb^\T\vy_\tau +\tau\nu$. Conversely, $(1,\vy)$ is a dual certificate of $-g(1,\vy)$ and also of every vector in the Dikin ellipsoid of radius $r$ around this center. (Here, $r=1$ for H-certificates and $r=(2(\nu+1))^{-1}$ for B-certificates, based on Theorems \ref{thm:Dikin-sufficient-H} and \ref{thm:Dikin-sufficient-B}.) Thus, for every $\vy$ sufficiently close to $\vy_\tau$, we have that
\begin{equation}
\label{eq:gamma-sufficient}
(\gamma,-b) \in \mathscr{B}^*(-g(1,\vy),r)
\end{equation}
for some $\gamma$. With $\vy$ and the radius $r$ fixed, the set of values $\gamma$ for which
\eqref{eq:gamma-sufficient} holds
is an interval that can be easily computed by solving a scalar quadratic equation. (To set up this equation, we need a factorization or the inverse of the Hessian of $f_\Kcas$ at $y$, but this is already necessary to run the IPAs. The cost of solving this equation is therefore negligible.) What our arguments above show is that \emph{in a (known) neighborhood of the central path, the smallest $\gamma$ value in this interval} is no greater than $\vb^\T\vy_\tau +\tau\nu$, and therefore the current dual solution and the reconstructed primal solution will have a small duality gap.

\begin{example}
\label{ex:LP-reconstruction}
Consider a standard form linear program and its dual, Eq.~\eqref{eq:std_conic_PD}
with
\[  \K=\R_+^4, \quad \vc^\T = (1,1,1,1), \quad \vA = \begin{pmatrix}
    1 & 2 & 3 & 4\\ 0 & -6 & 0 & 1
\end{pmatrix}, \quad \vb = \begin{pmatrix}
    19 \\ -5
\end{pmatrix}.\]
The optimal objective function value is $5.5$, attained at $\vx=(0, 1.5, 0, 4)^\T$. A crudely calculated near-optimal dual solution near $\vy_{0.1}$, the point corresponding to $\tau=0.1$ on the central path, is
\[ \vy = (6/23, -2/29)^\T \; \text{ with }\; \vb^\T\vy = 3536/667 \approx 5.30135.\]

We can plug this dual solution (with $y_0=1$) and our barrier \eqref{eq:fKcas} into the reconstruction formula with the target primal objective function value $\gamma$. The resulting primal solution candidate is
\[\renewcommand*{\arraystretch}{1.5}
\vx_\gamma =
\begin{pmatrix}
    \frac{10459868 \gamma -57626369}{13715262} \\
    \frac{21391355-2022605 \gamma}{6857631} \\
    \frac{31571864 \gamma -173276291}{13715262} \\
    \frac{-4045210 \gamma +31353325}{2285877}
\end{pmatrix},
\]
and it can be directly verified that $\vc^\T\vx_\gamma=\gamma$ and $\vA\vx_\gamma=\vb$ independent of $\gamma$, as desired. In this simple example, rather than using the sufficient condition \eqref{eq:gamma-sufficient}, we can directly compute the smallest $\gamma$ for which $\vx_\gamma\geq 0$. The best attainable primal objective value thus certified is
\[ \gamma = 57626369/10459868\approx 5.50928. \]
Note that, perhaps surprisingly, the certified primal objective is considerably closer to optimal than the value attained by the dual feasible solution used to certify it.
\end{example}

\section{Application to polynomial optimization}
\label{sec:POP}

In this section, we explore the application of H- and B-certificates in the context of certificates proving the nonnegativity of polynomials. As it will be apparent, our tools developed in this paper can be directly applied to all the tractable nonnegativity certificates popular in the polynomial optimization literature.

Recall that a polynomial optimization problem (POP) is a problem of the form
\begin{equation}
\label{eq:POP}
\inf \left\{ p(x)\;|\;q_i(x)\geq 0\; (i=1,\dots,m) \right\},
\end{equation}
where $p$ and $q_1,\dots,q_m$ are $n$-variate polynomials. This, generally intractable, non-convex optimization problem is often approached as a (convex) optimization problem over the cone of nonnegative polynomials:
\[ \sup \left\{\alpha\in\R\;|\; p-\alpha\in\mathscr{P}_\Delta\right\} ,\]
where $\mathscr{P}_\Delta$ is the cone of polynomials that are nonnegative over the set
\begin{equation}
\label{eq:Delta}
\Delta \defeq \left\{ \vx\in\R^n\;|\;q_i(x)\geq 0\; (i=1,\dots,m) \right\}.
\end{equation}
The strategy then is to replace $\mathscr{P}_\Delta$ with a convex subcone $\mathscr{C}\subseteq\mathscr{P}_\Delta$ whose membership problem is tractable, such as a (weighted) sums-of-squares (SOS) or sums of nonnegative circuit (SONC) polynomial cone:
\begin{equation}
\label{eq:POP-relax}
\sup \left\{\alpha\in\R\;|\; p-\alpha\in\mathscr{C}\right\}.
\end{equation}
Every feasible solution of \eqref{eq:POP-relax} yields a lower bound on the optimal value of \eqref{eq:POP}. This can be computed as a heuristic (with no expectation that the resulting lower bound is close to the infimum of the original problem), or as part of a scheme (theoretically supported by a \emph{Positivstellensatz}, i.e., a characterization of positive polynomials, such as \cite{Handelman1988, Putinar1993, DresslerIlimanDeWolff2017, KuryatnikovaVeraZuluaga2024}), where a sequence of increasingly good approximations in the form of nested cones
\[ \mathscr{C}_1 \subseteq \mathscr{C}_2 \subseteq \dots
\quad \text{ satisfying }\; \operatorname{cl}\left({\cup_{i=1}^\infty \mathscr{C}_i}\right) = \mathscr{P}_\Delta\]
is available, leading to a sequence of lower bounds converging to the infimum of \eqref{eq:POP}.

In the presence of strong duality, \revision{if we were only concerned about the approximate numerical value of this lower bound}, we may opt to solve the dual problem of \eqref{eq:POP-relax} instead; for example, if $\C$ is a suitable SOS cone, \eqref{eq:POP-relax} is known as the \emph{SOS relaxation} of the POP \eqref{eq:POP}, and the dual of \eqref{eq:POP-relax} is known as  the \emph{moment relaxation} \cite{Lasserre2001}. The theoretical conundrum is now similar to the issue we have encountered in the Introduction and Section \ref{sec:optimization}: every cone $\C$ that is typically used in this context to approximate $\mathscr{P}_\Delta$, including each of those examined later in this section, is represented as a linear image $\K_A$ of a ``nice'' cone $\K$ (recall Eq.~\eqref{eq:Ax-cone-with-dual}), which means that a typical numerical method solving \eqref{eq:POP-relax} will result only in an \emph{approximate} certificate, rigorously proving that some polynomial \emph{close to, but not identical to} $p-\alpha$ is nonnegative over $\Delta$ \cite{MagronElDin2018}. On the dual side, a near-optimal feasible solution certifiably in $(\C^*)^\circ$ can be found, but this only yields an \emph{upper bound} on the optimal value of \eqref{eq:POP-relax}, that is, an upper bound on a lower bound on the optimal value of \eqref{eq:POP}. In summary, both \eqref{eq:POP-relax} and its dual may be solvable approximately using numerical methods, but neither numerical solutions provide a theoretically satisfactory, rigorously certified lower bound (with a nonnegativity certificate that is verifiable in rational arithmetic) on the optimal value of \eqref{eq:POP}. Instead, additional post-processing, in the form of some rounding procedure, is required to turn the numerical bounds and certificates into rigorous ones, which (in addition to being additional work) puts additional constraints on the quality of the numerical solution. See, e.g., \cite{PeyrlParrilo2008,KaltofenLiYangZhi2012,MagronElDin2018} for examples in the context of SOS certification.

The dual certificates described in this paper, and the reconstruction formula \eqref{eq:reconstruction-formula} resolve this problem in a simple, unifying framework, by allowing us to compute explicit, exact SOS, SONC, etc. representations, that is, near-optimal (exactly) feasible solutions to \eqref{eq:POP-relax}, from suitable near-optimal solutions of the dual problem for various cones $\mathscr{C}$. As an additional theoretical benefit, dual certificates provide ``parsimonious'' characterizations of polynomials in $\C$, in contrast with the usual representations of these cones as images of much higher dimensional cones. (Although, if necessary, those representations can also be constructed in closed form using the reconstruction formula.)

In the following subsections, we discuss these characterizations for the most commonly used tractable subcones of nonnegative polynomials. For ease of presentation, we focus on nonnegativity over $\Delta = \R^n$; the extension to the general case is straightforward as long as an appropriate Positivstellensatz is available. For instance, when $\Delta$ is compact, the results of the recent work \cite{KuryatnikovaVeraZuluaga2024} cover at least one appropriate Positivstellensatz using every technique considered in this section: SOS, SONC, DSOS, SDSOS and SAGE polynomials, and nonnegative AG functions. Throughout, we shall represent polynomials in the monomial basis, using the common shorthand $\vx^\vaf \defeq \prod_{i=1}^n x_i^{\alpha_i}$ to represent monomials.

\subsection{Sums-of-squares certificates}
\label{sec:SOS}
We return to \cref{ex:SOS} to develop some additional detail regarding H-certificates in sums-of-squares cones.

\begin{definition}
An $n$-variate polynomial of degree $2d$ is \emph{sums-of-squares} (SOS) if it can be written as a sum of squares of degree-$d$ polynomials. The set of such polynomials is denoted by $\Sigma_{n,d}$.
\end{definition}
The set $\Sigma_{n,d}\subseteq\R^N$ with $N=\binom{n+2d}{n}$ is a proper convex cone with a well-known semidefinite representation, briefly recalled for completeness in \cref{thm:SOS} below. We need the following notation: $\bS^M$ denotes the set of real, symmetric matrices of order $M$, and $\bS^M_+$ denotes the cone of positive semidefinite matrices within $\bS^M$. If $A(\cdot)$ is a linear map between finite dimensional real linear spaces, $A^*(\cdot)$ denotes its adjoint.
\begin{proposition}
\label{thm:SOS}
Given positive integers $d$ and $n$, denote by $m_d(x)$ the vector of monomials of degree up to $d$ in the variables $x=(x_1,\dots,x_n)$, let $M=\binom{n+d}{n}$ and $N=\binom{n+2d}{n}$. A polynomial $p:x\mapsto\sum_{i=1}^N p_{\vaf_i}x^{\vaf_i}$ is SOS if and only if
\begin{equation}
\label{eq:mXm}
p(x) = m_d(x)^\T\vX m_d(x)
\end{equation}
identically for some $\vX\in\bS^M_+$. In other words, there exists a linear map $A:\bS^M \to \R^N$ such that
\[ p \in \Sigma_{n,d} \Longleftrightarrow \exists\,\vX\in\bS^M_+: p = A(\vX). \]
The dual-SOS cone, often called the \emph{pseudo-moment cone}, in turn, also has a semidefinite representation:
\[ y \in \Sigma_{n,d}^* \Longleftrightarrow  A^*(y)\in\bS^M_+. \]
\end{proposition}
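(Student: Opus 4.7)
The plan is to establish the two halves of the proposition via the classical Gram matrix argument, followed by a standard duality calculation.

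For the primal characterization, I would exploit the bijection between polynomials of degree at most $d$ and coefficient vectors in $\R^M$: every such polynomial $q$ may be written uniquely as $q(x) = v^\T m_d(x)$. Hence if $p = \sum_i q_i^2$ with $\deg q_i \le d$ and $q_i = v_i^\T m_d$, then $\vX \defeq \sum_i v_i v_i^\T \in \bS^M_+$ yields $p(x) = m_d(x)^\T \vX m_d(x)$ identically. Conversely, any $\vX \succeq 0$ factors as $\vX = \sum_i v_i v_i^\T$ (via, e.g., the spectral decomposition), and substituting into \eqref{eq:mXm} expresses $p$ as $\sum_i (v_i^\T m_d)^2$, a sum of squares of polynomials of degree at most $d$. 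To make the linear map $A$ explicit, I would rewrite \eqref{eq:mXm} as $p(x) = \langle \vX, m_d(x) m_d(x)^\T \rangle$ and match coefficients: the coefficient of each $x^{\vaf_i}$ on the right-hand side is a known linear functional of $\vX$, and the assignment $\vX \mapsto (p_{\vaf_1}, \dots, p_{\vaf_N})$ defines the desired linear map $A:\bS^M \to \R^N$.

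For the dual characterization, the result follows directly from $\Sigma_{n,d} = A(\bS^M_+)$ (the forward direction just established) together with the standard duality calculation
\begin{align*}
\Sigma_{n,d}^* &= \{ y \in \R^N \mid \langle y, A(\vX) \rangle \geq 0 \text{ for all } \vX \in \bS^M_+ \} \\
&= \{ y \in \R^N \mid \langle A^*(y), \vX \rangle \geq 0 \text{ for all } \vX \in \bS^M_+ \},
\end{align*}
where $\R^N$ is equipped with its standard inner product and $\bS^M$ with the trace inner product. Invoking the self-duality of the PSD cone, $(\bS^M_+)^* = \bS^M_+$, this final set reduces to $\{ y \mid A^*(y) \in \bS^M_+ \}$, completing the proof.

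I do not anticipate substantive obstacles; the argument is entirely classical and appears throughout the SOS literature. The only care points are (i) fixing consistent inner products on $\R^N$ and $\bS^M$ so that the adjoint $A^*$ is unambiguous, and (ii) invoking self-duality of $\bS^M_+$ to close the equivalence in the dual step.
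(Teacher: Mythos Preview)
The paper states this proposition without proof, treating it as a well-known background result (the semidefinite representation of SOS polynomials is cited to the SOS literature, e.g., \cite{Parrilo2000}). Your proposed argument---the classical Gram matrix characterization for the primal direction and the adjoint/self-duality computation for the dual direction---is correct and is exactly the standard proof one finds in that literature, so there is nothing to compare.
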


Thus, $\Sigma_{n,d}$ is clearly of the form $\K_A$ in \eqref{eq:Ax-cone-with-dual}, with $\bS^M_+$ playing the role of $\K$. Note that in general, $N \gg M$, that is, this representation characterizes the SOS cone as the linear image of a much higher-dimensional semidefinite cone. The linear map $A$ in \cref{thm:SOS} can be easily constructed; in matrix form it is a zero-one matrix that encodes which monomials can be obtained as a product of pairs of monomials.

Analogous statements can be formulated for polynomials represented in other bases, such as the Chebyshev basis of the first or second kind, which helps numerical conditioning, and also for weighted SOS polynomials nonnegative over semialgebraic sets $\Delta$; this requires no changes other than replacing $A^*$ with another suitable, easy-to-construct, linear map $\Lambda$ that depends on the bases and the polynomials representing $\Delta$ in Eq.~\eqref{eq:Delta}; see, e.g., \cite{Nesterov2000}.

It is well-known that $\bS^M_+$ is the hyperbolicity cone associated with the determinant function (a hyperbolic polynomial), and as a result, the function $f:X\mapsto-\log\det(X)$ is an $M$-LHSCB for this cone \cite{Guler1997}. Consequently, $\Sigma_{n,d}^*$ is also a hyperbolicity cone with the corresponding $M$-LHSCB $f_{\Sigma_{n,d}^*}:y\mapsto-\log\det\Lambda(y)$, and H-certificates defined using the Hessian of this function are rigorous nonnegativity certificates. Often, the operator $\Lambda$ corresponding to the bases and weights used to represent the cone maps rational vectors to rational matrices. This means that numerically computed H-certificates are automatically rational nonnegativity certificates that can be verified in polynomial time (in the bit model) in rational arithmetic \cite{DavisPapp2024}.

\subsection{SONC certificates}

\begin{definition}
We say that an $n$-variate polynomial $p$ is a \emph{circuit polynomial} if its support can be written as $\supp(p) = \{\vaf_1,\dots,\vaf_r,\vbeta\} \subset \N^n$, where the set $\{\vaf_1,\dots,\vaf_r\}$ is affinely independent and $\vbeta = \sum_{i=1}^r \lambda_i \vaf_i$ with some $\lambda_i>0$ satisfying $\sum_{i=1}^r \lambda_i = 1$. In other words, $\vbeta$ lies in the convex hull of the $\vaf_i$, and the scalars $\lambda_i$ are the corresponding (uniquely defined, strictly positive) barycentric coordinates of $\vbeta$. The support set of a circuit polynomial is called a \emph{circuit}.
\end{definition}

The cone of nonnegative circuit polynomials is closely related to the notion of power cones. The literature has conflicting notation and terminology; we shall use the following definitions, compatible with our earlier \cref{ex:ExpConeNoH}:
\begin{definition}
The \emph{(generalized) power cone} with \emph{signature} $\vlam = (\lambda_1,\dots,\lambda_r)$ is the convex cone defined as
\begin{equation*}
    \cP_\vlam \defeq \left\{ (\vv,z)\in \R_+^r \times \R \,\middle|\, |z|\leq \vv^\vlam \right\}.
\end{equation*}
Its dual cone is the \emph{(generalized) dual power cone}:
\[ \cP^*_\vlam \defeq \left\{ (\vv,z)\in \R_+^r \times \R \,\middle|\, |z|\leq \prod_{i=1}^r\left(\frac{v_i}{\lambda_i}\right)^{\lambda_i} \right\}. \]
\end{definition}

It can be shown that $\cP_\vlam$ and $\cP_\vlam^*$ are proper convex cones for every signature $\vlam\in (0,1)^r$ \cite{Chares2009,DresslerNaumannTheobald2021}.
With this notation, it is easy to characterize the subset of circuit polynomials nonnegative on $\R^n$:

\begin{proposition}[\cite{IlimanDeWolff2016,Papp2023}]\label{thm:SONC-main}
Let $p$ be an $n$-variate circuit polynomial satisfying $p(\vx) = \sum_{i=1}^r \fai \vx^{\vaf_i} + p_{\vbeta} \vx^{\vbeta}$ for some real coefficients $\fai$ and $p_\vbeta$, and suppose that $\vbeta = \sum_{i=1}^r \lambda_i \vaf_i$ with some $\lambda_i>0$ satisfying $\sum_{i=1}^r \lambda_i = 1$. Then $p$ is nonnegative if and only if $\vaf_i \in (2\N)^n$ and $\fai > 0$ for each $i$, and at least one of the following two alternatives holds:
\begin{enumerate}
\item $\vbeta \in (2\N)^n$ and $p_{\vbeta} \geq 0$, or
\item $\big((p_{\vaf_1},\dots,p_{\vaf_r}),p_{\vbeta}\big) \in \cP^*_\vlam$.
\end{enumerate}
\end{proposition}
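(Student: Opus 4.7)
The plan is to establish the two directions separately. For sufficiency, I apply weighted AM-GM to the termwise-nonnegative sum $\sum_i \fai \vx^{\vaf_i}$. For necessity, I exploit the affine independence of $\{\vaf_1,\dots,\vaf_r\}$ to isolate each monomial of $p$ via a substitution of the form $\vx_k = s_k e^{-tw_k}$, followed by an AM-GM equality-case argument to pin down $p_\vbeta$.

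\smallskip
For sufficiency, assume $\vaf_i\in(2\N)^n$ and $\fai>0$ for every $i$, so each $\fai\vx^{\vaf_i}\geq 0$. Case~1 is then trivial, since $p_\vbeta\vx^\vbeta\geq 0$ too. In case~2, weighted AM-GM with weights $\lambda_i$ gives
\begin{equation*}
\sum_{i=1}^r \fai\vx^{\vaf_i}=\sum_{i=1}^r\lambda_i\cdot\frac{\fai\vx^{\vaf_i}}{\lambda_i}\;\geq\;\prod_{i=1}^r\left(\frac{\fai\vx^{\vaf_i}}{\lambda_i}\right)^{\lambda_i}=\prod_{i=1}^r\left(\frac{\fai}{\lambda_i}\right)^{\lambda_i}|\vx|^\vbeta,
\end{equation*}
using $\vx^{\vaf_i}=|\vx|^{\vaf_i}$ (each $\vaf_i$ is even) and $\sum_i\lambda_i\vaf_i=\vbeta$. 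Combined with $|\vx^\vbeta|\leq|\vx|^\vbeta$ and the assumption $|p_\vbeta|\leq\prod_{i=1}^r(\fai/\lambda_i)^{\lambda_i}$ (i.e., $((p_{\vaf_1},\dots,p_{\vaf_r}),p_\vbeta)\in\cP^*_\vlam$), this yields $p(\vx)\geq 0$.

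\smallskip
For necessity, I first prove $\vaf_i\in(2\N)^n$ and $\fai>0$. Since $\vaf_i$ is a vertex of $\conv\{\vaf_1,\dots,\vaf_r\}$ (by affine independence), there exists $w\in\R^n$ with $(\vaf_j-\vaf_i)\cdot w>0$ for every $j\neq i$, and consequently $(\vbeta-\vaf_i)\cdot w>0$. For any sign vector $s\in\{\pm 1\}^n$, the curve $\vx(t)\defeq(s_1 e^{-tw_1},\dots,s_n e^{-tw_n})$ satisfies $e^{t\,\vaf_i\cdot w}p(\vx(t))\to \fai s^{\vaf_i}$ as $t\to\infty$, since every other monomial decays exponentially. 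Nonnegativity of $p$ therefore forces $\fai s^{\vaf_i}\geq 0$ for every $s$, giving $\fai>0$ (via $s=\mathbf 1$) and then $s^{\vaf_i}=+1$ for every $s$, which is equivalent to $\vaf_i\in(2\N)^n$. Next, the AM-GM inequality above is attained at a point $\vx^*\in\R^n_{>0}$ where $\fai(\vx^*)^{\vaf_i}/\lambda_i$ is independent of $i$; existence of such a $\vx^*$ reduces to solvability of the logarithmic system $\vaf_i\cdot\log\vx=\log(c\lambda_i/\fai)$, which follows from affine independence. Since each $\vaf_i$ is even, flipping signs of coordinates of $\vx^*$ leaves every $(\vx^*)^{\vaf_i}$ unchanged. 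If $\vbeta\notin(2\N)^n$, such sign flips toggle the sign of $(\vx^*)^\vbeta$, and $p(\vx^*)\geq 0$ in both sign configurations forces $|p_\vbeta|\leq\prod_{i=1}^r(\fai/\lambda_i)^{\lambda_i}$, i.e., case~2. If $\vbeta\in(2\N)^n$, then $(\vx^*)^\vbeta>0$ is fixed, and $p(\vx^*)\geq 0$ yields $p_\vbeta\geq-\prod_{i=1}^r(\fai/\lambda_i)^{\lambda_i}$, which is case~1 (if $p_\vbeta\geq 0$) or case~2 (if $p_\vbeta<0$).

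\smallskip
The main delicate step is the coordinated sign/AM-GM argument in the last paragraph: one must verify that a single $\vx^*\in\R^n_{>0}$ attaining AM-GM equality can be sign-perturbed to realize both signs of $(\vx^*)^\vbeta$ without disrupting the equality, which works precisely because each $\vaf_i$ has already been shown to lie in $(2\N)^n$. The solvability of the defining logarithmic system for $\vx^*$ is a linear-algebra consequence of affine independence.
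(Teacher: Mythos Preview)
The paper does not prove this proposition; it is quoted from \cite{IlimanDeWolff2016,Papp2023} and used as background. Your argument is correct and is essentially the standard proof from that literature: sufficiency via weighted AM--GM, necessity by isolating each outer monomial with an exponential curve and then evaluating at an AM--GM equality point together with sign flips.

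Two small points worth tightening. First, setting $s=\mathbf 1$ only yields $\fai\geq 0$; strict positivity uses that $\vaf_i\in\supp(p)$ forces $\fai\neq 0$. Second, the solvability of the system $\vaf_i\cdot\log\vx=\log(c\lambda_i/\fai)$ genuinely needs the extra degree of freedom in $c$: subtracting the $r$-th equation leaves $r-1$ equations with linearly independent left-hand sides $\vaf_i-\vaf_r$ (this is where affine independence enters), which determines $\log\vx^*$, after which $c$ is fixed by the remaining equation. You state this correctly but tersely; it is the one place a reader might pause.
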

\Cref{thm:SONC-main} has a simple interpretation: a circuit polynomial is nonnegative if and only if it is either a sum of nonnegative monomials, also known as \emph{monomial squares}, or if its nonnegativity can be directly proven by applying the weighted AM/GM inequality to its monomials, with weights $\vlam$.

\begin{definition}
\label{def:SONC}
    We say that a polynomial is a \emph{sum of nonnegative circuit polynomials}, or \emph{SONC} for short, if it can be written as a sum of monomial squares and nonnegative circuit polynomials. An explicit representation of a polynomial in this form is called a \emph{SONC decomposition}.
\end{definition}

We are ready to state the key representation theorem of SONC polynomials.
\begin{proposition}[\protect{\cite[Theorem 5]{Papp2023}}]
\label{thm:support}
Every SONC polynomial $p$ has a SONC decomposition in which every nonnegative circuit polynomial and monomial square is supported on a subset of $\supp(p)$.
\end{proposition}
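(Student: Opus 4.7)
The plan is to start from any SONC decomposition $p = \sum_{j} m_j + \sum_{j} q_j$ (with $m_j$ monomial squares and $q_j$ nonnegative circuit polynomials) and refine it to eliminate the set $T$ of exponents appearing somewhere in the decomposition but not in $\supp(p)$.

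The first step is a simple extremality observation. By \cref{thm:SONC-main}, the outer exponents of every $q_j$ lie in $(2\N)^n$ with strictly positive coefficients; monomial squares likewise carry strictly positive coefficients. For any $\gamma\in T$, the total $\vx^\gamma$-coefficient across the decomposition must equal $0$, so positive contributions from monomial squares and outer circuit terms must be cancelled by negative contributions, which can only come from inner circuit terms. Moreover, inner exponents of any circuit are strict convex combinations of the outer exponents of that same circuit, hence strictly interior to the convex hull $P$ of all exponents appearing in the decomposition. It follows that vertices of $P$ cannot be in $T$, and every $\gamma\in T$ must appear as the inner exponent $\vbeta_j$ of at least one circuit $q_j$.

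The second and main step is to eliminate these extraneous inner exponents. For each $\gamma = \vbeta_j \in T$ I would exploit the slack in the AM/GM condition $\bigl((p_{j,\vaf_{j,i}})_i,p_{j,\gamma}\bigr)\in\cP^*_\vlam$ to shrink $|p_{j,\gamma}|$ toward zero, while simultaneously compensating elsewhere---either in another circuit whose support contains $\gamma$, or by adjusting monomial squares on the (even) outer exponents---so that the total $\vx^\gamma$-coefficient stays at zero and no new element enters $T$. Iterating this would strictly reduce a well-chosen ``extraneous mass'' functional and drive $T$ to $\emptyset$; closedness of the cone of SONC decompositions with a fixed support can then be used to pass to an exact decomposition in the limit.

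The main obstacle is making this reduction rigorous: verifying that each perturbation yields a valid SONC decomposition, that the process does not introduce new extraneous exponents, and that it converges. A conceptually cleaner alternative I would fall back on is a conic-separation argument: let $\mathscr{C}_{\supp(p)}$ denote the closed convex cone of nonnegative combinations of monomial squares and nonnegative circuit polynomials supported in $\supp(p)$; if $p\notin\mathscr{C}_{\supp(p)}$, a separating linear functional $\ell$ would be nonnegative on $\mathscr{C}_{\supp(p)}$ but negative at $p$. One would then derive a contradiction with $p$ being SONC by using the extremal structure of $\cP^*_\vlam$ to show that $\ell$ is in fact nonnegative on every nonnegative circuit polynomial whose contribution cannot be absorbed into $\mathscr{C}_{\supp(p)}$---the delicate case being circuits whose inner exponent lies in $T$, which is precisely where the AM/GM bound and the first step combine to close the argument.
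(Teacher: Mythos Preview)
The paper does not prove this proposition; it is quoted verbatim from \cite[Theorem~5]{Papp2023} and used as a black box to express the SONC cone in the form $\K_A$. There is therefore no ``paper's own proof'' to compare your proposal against.

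As for the proposal on its own terms: your first step (vertices of the Newton polytope of the decomposition lie in $\supp(p)$) is correct and is indeed the natural starting point. The second step, however, is only a plan, and the gaps you flag are real. The iterative shrinking of $|p_{j,\gamma}|$ is underspecified: when you compensate by adjusting other circuits or monomial squares, you must ensure that \emph{every} circuit in the new decomposition still satisfies the power-cone condition of \cref{thm:SONC-main}, and you have not identified a potential function that strictly decreases. The conic-separation alternative has the same defect one level up: showing that a functional nonnegative on $\mathscr{C}_{\supp(p)}$ is automatically nonnegative on every nonnegative circuit polynomial whose inner exponent lies in $T$ is essentially the content of the theorem, and you have not supplied the argument that closes it. In short, you have correctly located where the difficulty lies but have not yet resolved it.
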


Thus, in the notation of Section \ref{sec:projected-cones}, the cone of SONC polynomials of degree $\deg(p)$ supported on $\supp(p)$ is a cone of the form $\K_A \subseteq \R^{|\supp(p)|}$,
where the high-dimensional cone $\K$ is the Cartesian product of dual power cones (one for each circuit supported on $\supp(p)$) and nonnegative half-lines (one for each monomial square). The matrix $A$ of the linear transformation is a matrix of zeros and ones that tallies the coefficients of each monomial from the different circuits and monomial squares they appear in.

Since the dual cone $\Ks$ is the Cartesian product of a nonnegative orthant and power cones of the form $\cP_\vlam$ with different signatures $\vlam$, in order to apply the techniques of Section \ref{sec:projected-cones}, we only need an LHSCB for $\cP_\vlam$. Such barriers have already been found by a number of authors \cite{Chares2009,RoyXiao2022}. For example,

\[f_{\cP_\vlam}(\vv,z) = -\ln(v^{2\lambda} - z^2)-\sum_{i=1}^n (1-\lambda_i)\ln(v_i)
\]
is an $(n+1)$-LHSCB for this cone \cite{RoyXiao2022}. Note, however, that $\cP_\vlam$ is not a hyperbolicity cone for every $\vlam$, and we have already shown in \cref{ex:ExpConeNoH} that H-certificates do not work for, e.g., $\vlam=(2/3,1/3)$. Therefore, at least for general $\vlam$, B-certificates are needed for rigorous nonnegativity certification.

\subsection{Signomial functions and S-cones}

The concept of SONC polynomials can be straightforwardly generalized to nonnegative $n$-variate functions of the form
\begin{equation}
\label{eq:AGF}
p(\vx) = \sum_{\vaf\in\mathscr{{A}}}c_\vaf|\vx|^\vaf +  \sum_{\vbeta\in\mathscr{{B}}}d_\vbeta\vx^\vbeta \end{equation}
with exponents $\mathscr{A}\subseteq\R^n$ and  $\mathscr{B}\subseteq\N^n\setminus(2\N)^n$, using $|x|$ to denote component-wise absolute value. The exponents in $\mathscr{A}$ need not be integers, so this set encapsulates not only polynomials of $\vx$ and $|\vx|$, but also signomial functions \cite{ChandrasekaranShah2016}.
Analogously to circuit polynomials in the SONC setting, we define ``atomic'' functions in this set, called AG functions, for which nonnegativity holds if and only if it can be proven via a single application of the weighted AM/GM inequality:
\begin{definition}
Let $p$ be a function of the form \eqref{eq:AGF}. We say that
\begin{enumerate}
    \item $p$ is an \emph{even AG function} with \emph{support} $(\mathscr{A},\mathscr{B})$ if at most one $c_\vaf$ is negative and all $d_\vbeta$ are zero; and
    \item $p$ is an \emph{odd AG function} with \emph{support} $(\mathscr{A},\mathscr{B})$ if all $c_\vaf$ are nonnegative and at most one $d_\vbeta$ is nonzero.
\end{enumerate}
In addition, $p$ is called an \emph{AG function with support $(\mathscr{A},\mathscr{B})$} if $p$ is an even AG function or an odd AG function with the same support. Finally, the \emph{S-cone} $\SAB$ is the set of AG functions nonnegative on $\R^n$.
\end{definition}
 Katth{\"a}n, Naumann and Theobald have shown that, analogously to the SONC results, functions in $\SAB$ can be written as sums of AG functions with support $(\mathscr{A},\mathscr{B})$ \cite{KatthanNaumannTheobald2021}. They have also characterized both even and odd AG functions in terms of at most two relative entropy cone inequalities \cite[Theorem 2.8]{KatthanNaumannTheobald2021}. This representation is compatible with the concept of dual certificates. For example, an $n$-variate odd AG function given by \eqref{eq:AGF}
with support $(\mathscr{A},\{\vbeta\})$ is nonnegative on $\R^n$ if and only if there exists a $\nu\in\R_+^\mathscr{A}$ such that
\[ \sum_{\vaf\in\mathscr{A}}\nu_\vaf\vaf = \Big(\sum_{\vaf\in\mathscr{A}}\nu_\vaf\Big)\vbeta, \quad (d,\nu,e\vc)\in\cR_{|\mathscr{A}|}, \quad \text{and} \quad (-d,\nu,e\vc)\in\cR_{|\mathscr{A}|},\] and where $e=\exp(1)$ and $\cR_N$ represents the \emph{(vector) relative entropy cone}
\begin{equation}
\label{eq:relative-entropy-def}
\cR_N \defeq \operatorname{cl}\left\{ (u,\vv,\vw) \in \R \times \R^N_{++} \times\R_{++}^N \,\middle|\, u \geq \sum_{i=1}^N v_i\ln\left(\frac{v_i}{w_i}\right)\right\}.
\end{equation}
A similar characterization can be given for nonnegative even AG functions. As a result, membership in $\SAB$ can be B-certified using any known LHSCB for the dual of the relative entropy cone, $\cR_N^*$. This cone is also already well-studied, and it also has a suitable barrier function:
\begin{proposition}[\cite{ChenGoulart2023-ARXIVONLY}]
\label{thm:relative-entropy-dual}
The dual of the cone $\cR_N$ given in Eq.~\eqref{eq:relative-entropy-def} is the cone
\begin{equation*}
\cR_N^* \defeq \operatorname{cl}\left\{ (u,\vv,\vw) \in \R_{++} \times \R^N_{++} \times\R^N \,\middle|\, w_i \geq u\left(\ln\left(\frac{u}{v_i}\right)-1 \right)\;(i=1,\dots,N)\right\}.
\end{equation*}
The function
\[f_{\cR_N^*}(u,\vv,\vw) \defeq -\sum_{i=1}^N\ln\left(w_i-u\ln\left(\frac{u}{v_i}\right)+u\right)-N\ln(u)-\sum_{i=1}^N\ln(v_i)\]
is a $(3N)$-LHSCB for $\cR_N^*$.
\end{proposition}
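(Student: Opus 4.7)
The plan is to establish the two assertions separately: first the duality statement, then the LHSCB properties.

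For the duality, I would exploit the fact that $\cR_N$ is a linear image of $N$ copies of the three-dimensional relative entropy cone $\cR_1$. Specifically, $(u,v,w)\in\cR_N$ iff one can write $u=\sum_i u_i$ with $(u_i,v_i,w_i)\in\cR_1$ for each $i$, so $\cR_N = L(\cR_1^N)$ where $L:\R^{3N}\to\R^{2N+1}$ is the linear map that collapses the $N$ individual $u_i$ components into their sum. Standard conic duality for linear images then gives $\cR_N^* = (L^\T)^{-1}((\cR_1^*)^N)$, which unpacks to the set of $(u,v,w)$ such that $(u,v_i,w_i)\in\cR_1^*$ for every $i$. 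The problem thus reduces to identifying $\cR_1^*$, which I would do by a direct Lagrangian argument: pair a candidate $(u^*,v^*,w^*)$ against a generic boundary point of $\cR_1$, reduce to the scalar problem of minimizing an expression of the form $u^* t\ln t + v^* t + w^*$ over the ratio $t>0$ of the two positive components, and verify that the minimum is nonnegative exactly when the claimed constraint holds, after a routine transformation to the stated logarithmic form. The boundary cases $u^*=0$ and the closure operation require separate but routine treatment.

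For the LHSCB, the key observation is that $f_{\cR_N^*}$ decomposes as a sum of $N$ three-variable barriers sharing a common first variable: writing $h(u,v,w) \defeq -\ln(w-u\ln(u/v)+u)-\ln u-\ln v$, one has $f_{\cR_N^*}(u,v,w)=\sum_{i=1}^N h(u,v_i,w_i)$. The function $h$ is the well-known $3$-LHSCB for the three-dimensional dual relative entropy cone \cite{ChenGoulart2023-ARXIVONLY}. Viewing each summand as an SCB on its three-dimensional sub-cone lifted to $\R\times\R^N\times\R^N$, the sum is an SCB for the intersection of these lifted cones, which is precisely $\cR_N^*$. For the barrier parameter and logarithmic homogeneity, I would verify directly that $f_{\cR_N^*}(tu,tv,tw)=f_{\cR_N^*}(u,v,w)-3N\ln t$. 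The crucial scaling identity is $tu\ln(tu/(tv_i))=tu\ln(u/v_i)$, after which each of the $N$ leading $-\ln(\cdots)$ terms contributes a $-\ln t$ (totalling $-N\ln t$), while $-N\ln u$ and $-\sum_i\ln v_i$ each contribute a further $-N\ln t$, summing to $-3N\ln t$ and thus yielding barrier parameter $3N$.

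The main technical obstacle is verifying the self-concordance of the three-variable barrier $h$ itself, which reduces to a tedious third-derivative computation. Rather than reproducing this, I would cite the Chen--Goulart result directly; alternatively, one can derive $h$ as the Legendre--Fenchel conjugate of the standard $3$-LHSCB for the primal relative entropy cone $\cR_1$ and invoke the preservation of self-concordance and barrier parameter under conjugacy \cite{NesterovNemirovskii1994}, sidestepping the direct derivative bound altogether.
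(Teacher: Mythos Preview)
The paper does not prove this proposition at all; it is stated with a citation to \cite{ChenGoulart2023-ARXIVONLY} and no argument is given. Your sketch therefore goes well beyond what the paper provides, and the approach is essentially correct.

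Two refinements are worth noting. For the barrier, rather than lifting each three-variable summand to $\R^{2N+1}$ separately (individually those lifts are degenerate, since $h(u,v_i,w_i)$ is constant in the other $v_j,w_j$), it is cleaner to argue that the product barrier $\sum_i h$ is a $(3N)$-LHSCB on $(\cR_1^*)^N\subset\R^{3N}$, and that $f_{\cR_N^*}$ is precisely its composition with the \emph{same} injective linear map $L^\T$ you introduced in the duality argument; item~\ref{item:injective-LHSCB} of \cref{thm:SCB-properties} then gives the $(3N)$-LHSCB conclusion immediately, and the separate homogeneity check becomes unnecessary. For the duality, your scalar minimization gives $w^*\geq u^* e^{-1-v^*/u^*}$, which rewrites as $v^*\geq u^*(\ln(u^*/w^*)-1)$ with $u^*,w^*>0$. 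This matches the proposition's inequality only after swapping the last two coordinate labels, so when you write this up, reconcile the variable ordering carefully against the primal definition~\eqref{eq:relative-entropy-def} and the cited source; the ``routine transformation'' is not quite as routine as you suggest.
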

Once again, B-certificates are required, because (similarly to the closely related exponential cone and its dual), relative entropy cones and their duals do not satisfy the relation \eqref{eq:HxinK}.

\subsection{Nonnegative bases, LP- and SOCP-representable subcones of SOS polynomials}
In an attempt to get around the high complexity of solving large-scaled SDPs associated with SOS certificates (\cref{thm:SOS}), various authors have suggested using polyhedral or second-order cone representable subcones of SOS polynomials \cite{Handelman1988,KuangGhaddarNaoumSawayaZuluaga2019,KuryatnikovaVeraZuluaga2024}. These include the following, in increasing order of complexity:
\begin{enumerate}
    \item Polynomials with nonnegative coefficients in a given basis that consists of trivially nonnegative polynomials over $\Delta$, e.g., when each basis polynomial is a product of some of the polynomials $q_i$ defining $\Delta$ in \eqref{eq:Delta};
    \item Polynomials of the form \eqref{eq:mXm} with a diagonally dominant $X$ whose diagonal is componentwise nonnegative (sometimes referred to as DSOS polynomials);
    \item Polynomials of the form \eqref{eq:mXm}, where $X$ is a matrix with factor-width no greater then $k$; i.e., $X$ is a sum of PSD matrices that are non-zero only in a single principal submatrix of order $k$ \cite{BlekhermanDeyShuSun2022}. (In the $k=2$ case, these are sometimes called SDSOS polynomials.)
\end{enumerate}
It is clear that each of these cones are of the form $\K_A$ where $\K$ is either the nonnegative orthant or a Cartesian product of PSD matrices of order at most $k$. In Cases (1) and (3) this is immediate from the definition. (Also recall that the cone of $2\times 2$ PSD matrices is linearly isomorphic to a three-dimensional second-order cone, making the SDSOS cone SOCP-representable.) In Case (2) we refer to the fact that diagonally dominant positive semidefinite matrices of order $n$ have $n^2$ explicitly known extreme rays, and every matrix in the cone can be written as a conic combination of these extreme vectors; see, e.g., \cite[Theorem 2]{BarkerCarlson1975} or \cite[Proposition 1]{Dahl2000}. Therefore, H-certificates obtained from the standard logarithmic barrier of $\K_A^*$ can be used to rigorously certify membership in this cone.

As with SOS polynomials, $\dim(\K)\gg\dim(\K_A)$ (particularly in Cases (1) and (3) above), and the matrix $A$ is an easily computable matrix with rational entries in the case of DSOS and SDSOS polynomials. It is usually a rational matrix in Case (1), too, unless the chosen basis polynomials have irrational coefficients. Thus, numerically computed H-certificates (see Section \ref{sec:computing}) are automatically rational vectors whose correctness can be verified in rational arithmetic if the certified polynomial $p-\alpha$ has rational coefficients.

\section{Discussion}
We presented a theoretical framework of cone membership certificates that allows us to interpret vectors in the dual cone as certificates of membership in the primal cone. The new certification scheme builds on the classic self-concordance theory of Nesterov and Nemirovski \cite{NesterovNemirovskii1994}, but it goes beyond the well-known result that the gradient map associated with an SCB defines a bijection between the interior of a cone and the interior of its dual. The key difference is that $\HC(\vb)$ and $\BC(\vb)$ are full-dimensional cones, which means that we can use numerical methods to identify any of the certificates, and still establish rigorous certification of membership. Similarly, in the case of a low-dimensional projection $\K_A$ of the high-dimensional cone $\K$, an exact preimage $\vx\in\K$ of a vector $\vb\in\K_A$ can be computed from any (numerically computed) dual certificate. The same infrastructure also allowed us to compute exact primal feasible solutions of general convex conic optimization problems from suitable dual feasible solutions, with duality gaps that approach zero as the dual feasible solutions approach optimality.

We also established that for many cones, some already existing optimization methods can be used to compute dual certificates. Specifically, feasible dual and primal-dual IPAs operating in appropriate neighborhoods of the central path, such as $\mathcal{N}(\eta,\tau)$ defined in \eqref{eq:nbd}, automatically work. For certain cones (e.g., linear images of symmetric cones), the existing dual methods automatically compute dual certificates, and these can now be extended to compute exact primal feasible solutions that converge to a primal optimal solution, complementing the dual feasible iterates of the method. For the most general case of nonsymmetric conic optimization, the IPA recently proposed by the authors \cite{PappVarga2025} can be applied.

In the future, we expect that this theory will lead to more efficient optimization methods for certain structured optimization problems. As outlined in the paper, optimization over a cone $\K_A$ can now circumvent the conventional ``lifting'' to optimization over $\K$ (also known as the ``extended formulation'') for a potential reduction in the solution time and especially in the required working memory. As long as $\Ks$ admits an LHSCB, we can directly optimize over $\KAs$ with the inherited barrier and reconstruct the desired primal solution in $\K_A$ after the fact.

Looking beyond optimization, the exactness of the reconstruction formula may be used to establish rigorous computer-generated proofs based on numerical optimization methods via rigorous nonnegativity certificates of polynomials. (E.g., exact SOS certificates of global lower bounds of a POP can be constructed from suitable feasible solutions of the Lasserre moment relaxation.) The last section of our paper lays the necessary theoretical groundwork for this; a more detailed study is the subject of a forthcoming paper.

\bibliographystyle{siamplain}
\bibliography{dualcert}

\begin{thebibliography}{10}

\bibitem{BarkerCarlson1975}
{\sc G.~Barker and D.~Carlson}, {\em Cones of diagonally dominant matrices}, Pacific Journal of Mathematics, 57 (1975), pp.~15--32, \url{https://doi.org/10.2140/pjm.1975.57.15}.

\bibitem{BlekhermanDeyShuSun2022}
{\sc G.~Blekherman, S.~S. Dey, K.~Shu, and S.~Sun}, {\em Hyperbolic relaxation of $k$-locally positive semidefinite matrices}, SIAM Journal on Optimization, 32 (2022), pp.~470--490, \url{https://doi.org/10.1137/20M1387407}.

\bibitem{ChandrasekaranShah2016}
{\sc V.~Chandrasekaran and P.~Shah}, {\em Relative entropy relaxations for signomial optimization}, SIAM Journal on Optimization, 26 (2016), pp.~1147--1173, \url{https://doi.org/10.1137/140988978}.

\bibitem{Chares2009}
{\sc R.~Chares}, {\em Cones and interior-point algorithms for structured convex optimization involving powers and exponentials}, PhD thesis, Universit{\'e} Catholique de Louvain, 2009.

\bibitem{ChenGoulart2023-ARXIVONLY}
{\sc Y.~Chen and P.~Goulart}, {\em An efficient {IPM} implementation for a class of nonsymmetric cones}, arXiv:2305.12275,  (2023), \url{https://arxiv.org/abs/2305.12275}.

\bibitem{CoeyKapelevichVielma2023}
{\sc C.~Coey, L.~Kapelevich, and J.~P. Vielma}, {\em Performance enhancements for a generic conic interior point algorithm}, Mathematical Programming Computation, 15 (2023), pp.~53--101, \url{https://doi.org/10.1007/s12532-022-00226-0}.

\bibitem{Dahl2000}
{\sc G.~Dahl}, {\em A note on diagonally dominant matrices}, Linear Algebra and its Applications, 317 (2000), pp.~217--224, \url{https://doi.org/10.1016/S0024-3795(00)00178-6}.

\bibitem{DavisPapp2022}
{\sc M.~M. Davis and D.~Papp}, {\em Dual certificates and efficient rational sum-of-squares decompositions for polynomial optimization over compact sets}, SIAM Journal on Optimization, 32 (2022), pp.~2461--2492, \url{https://doi.org/10.1137/21M1422574}.

\bibitem{DavisPapp2024}
{\sc M.~M. Davis and D.~Papp}, {\em Rational dual certificates for weighted sums-of-squares polynomials with boundable bit size}, Journal of Symbolic Computation, 121 (2024), p.~102254, \url{https://doi.org/10.1016/j.jsc.2023.102254}.

\bibitem{DresslerIlimanDeWolff2017}
{\sc M.~Dressler, S.~Iliman, and T.~de~Wolff}, {\em A {P}ositivstellensatz for sums of nonnegative circuit polynomials}, SIAM Journal on Applied Algebra and Geometry, 1 (2017), pp.~536--555, \url{https://doi.org/10.1137/16M1086303}.

\bibitem{DresslerNaumannTheobald2021}
{\sc M.~Dressler, H.~Naumann, and T.~Theobald}, {\em The dual cone of sums of non-negative circuit polynomials}, Advances in Geometry, 21 (2021), pp.~227--236, \url{https://doi.org/10.1515/advgeom-2020-0019}.

\bibitem{FarautKoranyi1994}
{\sc J.~Faraut and A.~Kor{\'a}nyi}, {\em Analysis on Symmetric Cones}, Oxford Mathematical Monographs, Oxford University Press, Oxford, {UK}, 1994.

\bibitem{Guler1997}
{\sc O.~G{\"u}ler}, {\em Hyperbolic polynomials and interior point methods for convex programming}, Mathematics of Operations Research, 22 (1997), pp.~350--377, \url{https://doi.org/10.1287/moor.22.2.350}.

\bibitem{Guler2010}
{\sc O.~G{\"u}ler}, {\em Foundations of optimization}, vol.~258 of Graduate Texts in Mathematics, Springer, New York, {NY}, 2010, \url{https://doi.org/10.1007/978-0-387-68407-9}.

\bibitem{Handelman1988}
{\sc D.~Handelman}, {\em Representing polynomials by positive linear functions on compact convex polyhedra}, Pacific Journal of Mathematics, 132 (1988), pp.~35--62, \url{https://doi.org/10.2140/pjm.1988.132.35}.

\bibitem{IlimanDeWolff2016}
{\sc S.~Iliman and T.~de~Wolff}, {\em Amoebas, nonnegative polynomials and sums of squares supported on circuits}, Research in the Mathematical Sciences, 3 (2016), pp.~1--35, \url{https://doi.org/10.1186/s40687-016-0052-2}.

\bibitem{KaltofenLiYangZhi2012}
{\sc E.~L. Kaltofen, B.~Li, Z.~Yang, and L.~Zhi}, {\em Exact certification in global polynomial optimization via sums-of-squares of rational functions with rational coefficients}, Journal of Symbolic Computation, 47 (2012), pp.~1--15, \url{https://doi.org/http://dx.doi.org/10.1016/j.jsc.2011.08.002}.

\bibitem{KatthanNaumannTheobald2021}
{\sc L.~Katth{\"a}n, H.~Naumann, and T.~Theobald}, {\em A unified framework of {SAGE} and {SONC} polynomials and its duality theory}, Mathematics of Computation, 90 (2021), pp.~1297--1322, \url{https://doi.org/10.1090/mcom/3607}.

\bibitem{Koecher1999}
{\sc M.~Koecher}, {\em The {M}innesota Notes on {J}ordan Algebras and Their Applications}, vol.~1710 of Lecture Notes in Mathematics, Springer-Verlag, 1999.

\bibitem{KuangGhaddarNaoumSawayaZuluaga2019}
{\sc X.~Kuang, B.~Ghaddar, J.~Naoum-Sawaya, and L.~F. Zuluaga}, {\em Alternative {SDP} and {SOCP} approximations for polynomial optimization}, EURO Journal on Computational Optimization, 7 (2019), pp.~153--175, \url{https://doi.org/10.1007/s13675-018-0101-2}.

\bibitem{KuryatnikovaVeraZuluaga2024}
{\sc O.~Kuryatnikova, J.~C. Vera, and L.~F. Zuluaga}, {\em Reducing nonnegativity over general semialgebraic sets to nonnegativity over simple sets}, SIAM Journal on Optimization, 34 (2024), pp.~1970--2006, \url{https://doi.org/10.1137/22M1501027}.

\bibitem{Lasserre2001}
{\sc J.~B. Lasserre}, {\em Global optimization with polynomials and the problem of moments}, SIAM Journal on Optimization, 11 (2001), pp.~796--817, \url{https://doi.org/10.1137/S1052623400366802}.

\bibitem{MagronElDin2018}
{\sc V.~Magron and M.~S.~E. Din}, {\em {RealCertify}: a {M}aple package for certifying non-negativity}, {ACM} Communications in Computer Algebra, 52 (2018), pp.~34--37, \url{https://doi.org/10.1145/3282678.3282681}.

\bibitem{Nesterov2000}
{\sc Y.~Nesterov}, {\em Squared functional systems and optimization problems}, in High Performance Optimization, H.~Frenk, K.~Roos, T.~Terlaky, and S.~Zhang, eds., Kluwer, Dordrecht, The Netherlands, 2000, pp.~405--440, \url{https://doi.org/10.1007/978-1-4757-3216-0_17}.

\bibitem{Nesterov2006}
{\sc Y.~Nesterov}, {\em Constructing self-concordant barriers for convex cones}, Tech. Report CORE 2006/30, Center for Operations Research and Econometrics (CORE), Catholic University of Louvain (UCL), Belgium, Mar. 2006.

\bibitem{Nesterov2018}
{\sc Y.~Nesterov}, {\em Lectures on convex optimization}, vol.~137 of Springer Optimization and Its Applications, Springer Nature, Cham, Switzerland, 2nd~ed., 2018, \url{https://doi.org/10.1007/978-3-319-91578-4}.

\bibitem{NesterovNemirovskii1994}
{\sc Y.~Nesterov and A.~Nemirovskii}, {\em Interior-point polynomial algorithms in convex programming}, vol.~13 of SIAM Studies in Applied Mathematics, Society for Industrial and Applied Mathematics (SIAM), Philadelphia, PA, 1994, \url{https://doi.org/10.1137/1.9781611970791}.

\bibitem{Papp2023}
{\sc D.~Papp}, {\em Duality of sum of nonnegative circuit polynomials and optimal {SONC} bounds}, Journal of Symbolic Computation, 114 (2023), pp.~246--266, \url{https://doi.org/10.1016/j.jsc.2022.04.015}.

\bibitem{PappVarga2025}
{\sc D.~Papp and A.~Varga}, {\em Interior-point algorithms with full {N}ewton steps for nonsymmetric convex conic optimization}, SIAM Journal on Optimization, 35 (2025), pp.~2490--2517, \url{https://doi.org/10.1137/25M1736396}.

\bibitem{PappYildiz2019}
{\sc D.~Papp and S.~Y{\i}ld{\i}z}, {\em Sum-of-squares optimization without semidefinite programming}, SIAM Journal on Optimization, 29 (2019), pp.~822--851, \url{https://doi.org/10.1137/17M1160124}.

\bibitem{PappYildiz2022}
{\sc D.~Papp and S.~Y{\i}ld{\i}z}, {\em Alfonso: Matlab package for nonsymmetric conic optimization}, INFORMS Journal on Computing, 34 (2022), pp.~11--19, \url{https://doi.org/10.1287/ijoc.2021.1058}.

\bibitem{Parrilo2000}
{\sc P.~A. Parrilo}, {\em Structured Semidefinite Programs and Semialgebraic Geometry Methods in Robustness and Optimization}, PhD thesis, California Institute of Technology, May 2000, \url{https://doi.org/10.7907/2K6Y-CH43}.

\bibitem{PeyrlParrilo2008}
{\sc H.~Peyrl and P.~A. Parrilo}, {\em Computing sum of squares decompositions with rational coefficients}, Theoretical Computer Science, 409 (2008), pp.~269--281, \url{https://doi.org/10.1016/j.tcs.2008.09.025}.

\bibitem{Powers2011}
{\sc V.~Powers}, {\em Rational certificates of positivity on compact semialgebraic sets}, Pacific Journal of Mathematics, 251 (2011), pp.~385--391, \url{https://doi.org/10.2140/pjm.2011.251.385}.

\bibitem{Putinar1993}
{\sc M.~Putinar}, {\em Positive polynomials on compact semi-algebraic sets}, Indiana University Mathematics Journal, 42 (1993), pp.~969--984.

\bibitem{Renegar2001}
{\sc J.~Renegar}, {\em A mathematical view of interior-point methods in convex optimization}, MOS-SIAM Series on Optimization, Society for Industrial and Applied Mathematics (SIAM), Phiadelphia, PA, 2001, \url{https://doi.org/10.1137/1.9780898718812}.

\bibitem{RoyXiao2022}
{\sc S.~Roy and L.~Xiao}, {\em On self-concordant barriers for generalized power cones}, Optimization Letters, 16 (2022), pp.~681--694, \url{https://doi.org/10.1007/s11590-021-01748-7}.

\bibitem{Serrano2015}
{\sc S.~A. Serrano}, {\em Algorithms for unsymmetric cone optimization and an implementation for problems with the exponential cone}, Stanford University, 2015.

\bibitem{SkajaaYe2015}
{\sc A.~Skajaa and Y.~Ye}, {\em A homogeneous interior-point algorithm for nonsymmetric convex conic optimization}, Mathematical Programming, 150 (2015), pp.~391--422, \url{https://doi.org/10.1007/s10107-014-0773-1}.

\end{thebibliography}

\end{document}